\numberwithin{equation}{section}
\newtheorem{theorem}{Theorem}[section]
\newtheorem{lemma}[theorem]{Lemma}
\newtheorem{problem}[theorem]{Problem}
\title[The Minkowski problem for $k$-torsional rigidity]{The  Minkowski problem for the $k$-torsional rigidity}
\author{Xia Zhao and Peibiao Zhao}
\thanks{2020 Mathematics Subject Classification:  52A20 \ \ 35K96\ \ 58J35.}
\keywords{Hessian equation; $k$-torsional rigidity; curvature flow; Minkowski problem}
\begin{document}
\begin{abstract}
P. Salani [Adv. Math., 229 (2012)] introduced the $k$-torsional rigidity associated with a $k$-Hessian equation and obtained the Brunn-Minkowski inequalities $w.r.t.$ the torsional rigidity in $\mathbb{R}^3$.  Following this work, we first construct, in the present paper, a Hadamard variational formula  for the $k$-torsional rigidity  with $1\leq k\leq n-1$, then we can deduce a $k$-torsional measure from the Hadamard variational formula. Based on the $k$-torsional measure, we propose the Minkowski problem for the $k$-torsional rigidity and confirm the existence of its smooth non-even solutions by the method of a curvature flow. Specially, a new proof method for the uniform lower bound estimation in the  $C^0$ estimation for the solution to the curvature flow is presented with the help of invariant functional $\Phi(\Omega_t)$.
\end{abstract}
\maketitle
\vskip 20pt
\section{Introduction and main results}

The classical Minkowski problem is a characterization problem for surface area measure $S(\Omega,\cdot)$ of a convex body $\Omega$: Given a non-zero finite Borel measure $\mu$ on unit sphere $S^{n-1}$, under what the necessary and sufficient conditions on $\mu$, does there exist an unique convex body $\Omega$ such that the given measure $\mu$ is equal to the surface area measure $S(\Omega,\cdot)$? If the given measure $\mu$ has a positive continuous density function $f$, the Minkowski problem can be equivalently regarded as the problem of prescribing the Gauss curvature in differential geometry. In general, the characterizing area measure $S_k(\Omega,\cdot)$ problem is also referred to as the Christoffel-Minkowski problem: for a given integer $1\leq k\leq n-1$ and a finite Borel measure $\mu$ on $S^{n-1}$, what are the necessary and sufficient conditions such that $\mu$ is equal to the area measure $S_{k}(\Omega,\cdot)$ of a convex body. When $k=1$, it is the Christoffel problem which was once independently solved by Firey \cite{FI1} and Berg \cite{BE}. In addition, Pogorelov \cite{PO} provided a partial result in the smooth case, Schneider \cite{SC0} get a more explicit solution with polytope case. See \cite{GO, GR} for more results. The case of $k=n-1$, the Christoffel-Minkowski problem is just the classical Minkowski problem. For $1<k<n-1$, it is a difficult and long-term open problem. Some important progress of the Christoffel-Minkowski problem was obtained by Guan and Guan \cite{GB} and Guan and Ma \cite{GM}, as well as \cite{GP, HC, ZR} and the other relevant references.

With the development of the Minkowski problem, it was extended to the $L_p$ forms by Lutwak \cite{LE0} with $p>1$, that is, the $L_p$ Minkowski problem. It is particularly crucial because the $L_p$ Minkowski problem contains some special versions. When $p=1$, it is the classical Minkowski problem; when $p=0$, it is the famous log-Minkowski problem \cite{BO}; when $p=-n$, it is the centro-affine Minkowski problem \cite{ZG}. Moreover, the solution of $L_p$ Minkowski problem plays a key role in establishing the $L_p$ affine Sobolev inequality \cite{HC2, LE01}. Interestingly, Haberl, Lutwak, Yang and Zhang \cite{HC1} proposed and studied the even Orlicz Minkowski problem in 2010 which is a more generalized form of the Minkowski problem, and its result contains the classical Minkowski problem and the $L_p$ Minkowski problem. Recently, Huang, Lutwak, Yang and Zhang \cite{HY} established the variational formulas for dual quermassintegrals with the convex hulls instead of the Wullf shapes. Therefrom, the $q$-th dual curvature measure $\widetilde{C}_q(\Omega,\cdot)$ was discovered and posed the dual Minkowski problem. Two special cases of the dual Minkowski problem include the log-Minkowski problem for $q=n$ and the Aleksandrov problem when $q=0$.

With the continuous development and enrichment of the Minkowski problems and their dual analogues, the Minkowski problem has inspired many other problems of a similar nature. Example includes the capacity Minkowski type problems which relates to the solution of boundary values problem. Early on, Jerison \cite{JE} introduced the capacity Minkowski problem related to the Dirichlet problem and through the prescribing capacity curvature measure to study this problem. Further, Xiao \cite{XJ} prescribed the capacitary curvature measures on planar convex domains. Based on these, Colesanti, Nystr\"{o}m, Salani, Xiao, Yang and Zhang \cite{CO} established the Hadamard variational formula and considered the Minkowski problem for $p$-capacity.

In addition,  another important Minkowski problem related to the solution of boundary value problem was also introduced into the Brunn-Minkowski theory, that is the Minkowski problem of the $q$-torsional rigidity. Notice that the $q$-torsional rigidity is essentially equivalent  to the existence of a solution to the $q$-Laplace equation. For convenience, we here state the definition of the $q$-torsional rigidity as follows. Let $\Omega\in\mathcal{K}^n$ (convex body), the $q$-torsional rigidity $T_q(\Omega)$ \cite{CP} with $q>1$ is defined by
\begin{align*}
\frac{1}{T_q(\Omega)}=\inf\bigg\{\frac{\int_{\Omega}|\overline{\nabla} U|^q dy}{[\int_{\Omega}|U|dy]^q}:U\in W_0^{1,q}(\Omega),\int_{\Omega}|U|dy>0\bigg\}.
\end{align*}
It is illustrated in \cite{BE0,HH} that the above functional has an unique minimizer $u\in W_0^{1,q}(\Omega)$ satisfying the following boundary value problem
\begin{align*}
\left\{
\begin{array}{lc}
\Delta_qu=-1\ \ \  \text{in} \ \ \ \Omega,\\
u=0,\ \ \ \ \ \ \ \ \text{on} \ \ \ \ \partial\Omega,\\
\end{array}
\right.
\end{align*}
where $$\Delta_qu\hat{=}{\rm div}(|\overline{\nabla} u|^{q-2}\overline{\nabla} u)$$
is the $q$-Laplace operator.

Applying the integral by part to the above $q$-Laplace equation, with the aid of Poho$\check{\textrm{z}}$aev-type identities \cite{PU}, the $q$-torsional rigidity formula can
be given by
\begin{align*}
T_q(\Omega)^{\frac{1}{q-1}}=&\frac{q-1}{q+n(q-1)}\int_{S^{n-1}}h(\Omega,\xi)d\mu^{tor}_{q}(\Omega,\xi)\\
\nonumber=&\frac{q-1}{q+n(q-1)}\int_{S^{n-1}}h(\Omega,\xi)|\overline{\nabla} u|^qdS(\Omega,\xi).
\end{align*}

When $q=2$, $T_q(\Omega)$ is the so-called torsional rigidity $T(\Omega)$ of a convex body $\Omega$ which was firstly introduced by Colesanti and Fimiani \cite{CA0}. The Minkowski problem for the torsional rigidity was studied in \cite{CA1} and extended to the $L_p$ version by Chen and Dai \cite{CZM} who proved the existence of solutions for any fixed $p>1$ and $p\neq n+2$, Hu and Liu \cite{HJ01} for $0<p<1$. Li and Zhu \cite{LN} first developed and proven the Orlicz Minkowski problem $w.r.t.$ the torsional rigidity. Huang, Song and Xu \cite{HY0} established the Hadamard variational formula for the $q$-torsional rigidity with $q>1$. Hu and Zhang \cite{HJ2} established the functional Orlicz-Brunn-Minkowski inequality for the $q$-torsional rigidity. Following the work of Hu and Zhang \cite{HJ2}, Zhao et al in \cite{ZX} have had a systematic investigation on this topic and proposed the Orlicz Minkowski problem for the $q$-torsional rigidity with $q>1$ and obtained its smooth non-even solutions by method of a Gauss curvature flow. Moreover, the authors further  in \cite{ZX1} have also posed and studied the $p$-th dual Minkowski problem for the $q$-torsional rigidity with $q>1$ and obtained the existence of smooth even solutions for $p<n$ $(p\neq 0)$ and smooth non-even solutions for $p<0$.

Just as described in \cite {CA0}, each functional can be defined either through a variational problem, posed in a suitable space of functions, or in terms of the solution of a boundary-value problem for an elliptic operator. The first definition is in the spirit of the calculus of variation while the second reflects the point of view of elliptic PDEs. The equivalence between the two definitions relies on a well-known principle: under suitable assumptions, the minimizers of a functional are solutions of a differential equation, called the Euler-Lagrange equation of the functional itself.

Motivated by the foregoing remarkable and celebrated works, we will, in the present paper, propose and demonstrate  the Minkowski problem $w.r.t.$ the $k$-torsional rigidity associated with a $k$-Hessian equation instead of  a $q$-Laplace equation. It is believed that this research will contribute to the enrichment and development for the Minkowski problem $w.r.t.$ the torsional rigidity. Now, we recall and state firstly the concept of the $k$-torsional rigidity and its related contents below. Let $\mathcal{K}^n$ be the collection of convex bodies in Euclidean space $\mathbb{R}^n$. The set of convex bodies containing the origin in their interiors in $\mathbb{R}^n$, we write $\mathcal{K}^n_o$. Moreover,  let $C^2_{+}$ be the class of convex bodies of $C^2$ with a positive Gauss curvature at the boundary.

We consider a  $k$-Hessian equation below:
\begin{align}\label{eq101}
\left\{
\begin{array}{lc}
S_k(D^2u)=1\ \ \  \text{in} \ \ \ \Omega,\\
u=0,\ \ \ \ \ \ \ \ \text{on} \ \ \ \ \partial\Omega,\\
\end{array}
\right.
\end{align}
where $\Omega$ is a bounded convex domain of $\mathbb{R}^n$ and $S_k(D^2u)$ is the $k$-elementary symmetric function of the eigenvalues of $D^2u$, $k\in\{1,\cdots,n\}$.

Notice that, when $k=1$ in (\ref{eq101}), it is the Laplace equation, while $k=n$ in (\ref{eq101}), it is the well-known Monge-Amp\`{e}re equation. For $k\geq 2$, the $S_k$ operator is fully nonlinear and it is not elliptic unless when it is restricted to a suitable class of admissible functions, the so-called $k$-convex functions (see Section \ref{sec2} for more details).

Next, we investigate the functional $T_k$ related to problem (\ref{eq101}) which can be defined as follows (see \cite{Sa}):
\begin{align}\label{eq102}
\frac{1}{T_k(\Omega)}=\inf\bigg\{\frac{-\int_{\Omega} wS_k(D^2w) dy}{[\int_{\Omega}|w|dy]^{k+1}}:w\in \Phi _{k}^0(\Omega)\bigg\},
\end{align}
where $\Phi _{k}^0(\Omega)$ the set of admissible function that vanish on the boundary.

Note that $S_1(D^2u)=\Delta u$ and the functional $T(\Omega)$ related to $\Delta u$ is called the torsional rigidity of $\Omega$ which was defined by Colesanti \cite{CA0}, for this reason, we will refer to $T_k(\Omega)$ as the $k$-torsional rigidity of $\Omega$.

Consider the functional
\begin{align*}
J(w)=\frac{1}{k+1}\int_{\Omega}(-w)S_k(D^2w) dy-\int_{\Omega}w dy.
\end{align*}
From the works of Wang \cite{WXJ,WXJ1}, we know that $J$ has a minimizer $u\in \Phi _{k}^{0}(\Omega)$ which solves (\ref{eq101}) and also minimizers the quotient in (\ref{eq102}). Then
\begin{align}\label{eq103}
T_k(\Omega)=\frac{[-\int_{\Omega}udy]^{k+1}}{\int_{\Omega}(-u)S_k(D^2u)dy}.
\end{align}
Integrating in $\Omega$ the equation in (\ref{eq101}), we obtain
\begin{align}\label{eq104}
-\int_{\Omega}uS_k(D^2u)dy=-\int_{\Omega}udy,
\end{align}
and using (\ref{eq103}), we find the following relation
\begin{align}\label{eq105}
T_k(\Omega)=\bigg[-\int_{\Omega}u(y)dy\bigg]^k.
\end{align}

We notice that $T_k:\mathbb{R}^n\rightarrow \mathbb{R}_{+}$ is a positively homogeneous operator of degree $(n+2)k$. Indeed, if $u$ is a solution of equation (\ref{eq101}) in $\Omega$, it is easily seen that the function
\begin{align*}
v(y)=\lambda^2u(y/\lambda)
\end{align*}
solves the same problem in $\lambda\Omega$; the homogeneity of $T_k$ easily follows from (\ref{eq105}).

From (\ref{eq101}), (\ref{eq105}) and Poho$\check{\textrm{z}}$aev identity \cite[Proposition 3 in Appendix A]{BR}, the following formula is obvious,
\begin{align*}
T_k(\Omega)=\bigg(\frac{1}{k(n+2)}\int_{S^{n-1}}h(\Omega,x)|Du(\nu_\Omega^{-1}(x))|^{k+1}dS_{n-k}(\Omega,x)\bigg)^k.
\end{align*}
Denote $\widetilde{T}_k(\Omega)=(T_k(\Omega))^{\frac{1}{k}}$, then $\widetilde{T}_k:\mathbb{R}^n\rightarrow \mathbb{R}_{+}$ is a positively homogeneous operator of degree $(n+2)$, and 
\begin{align*}
\nonumber\widetilde{T}_k(\Omega)=&\frac{1}{k(n+2)}\int_{S^{n-1}}h(\Omega,x)|Du(\nu_\Omega^{-1}(x))|^{k+1}dS_{n-k}(\Omega,x),
\end{align*}
where $u$ is the solution of (\ref{eq101}) on $\Omega$, $h(\Omega,\cdot)$ is the support function of $\Omega$, $\nu_\Omega$ is the Gauss map of $\partial\Omega$ (then $\nu_\Omega^{-1}(x)$ is the point on $\partial\Omega$ where the outer normal direction is $x$) and $S_{n-k}(\Omega,\cdot)$ denotes the $(n-k)$-th area measure of $\partial\Omega$. In particularly, when $k=1$, $S_{n-1}(\Omega,\cdot)=S(\Omega,\cdot)$ is the classical surface area surface and $T_1(\Omega)$ is the torsional rigidity of $\Omega$. From the theory of convex bodies and differential geometry (see for examples \cite{SC} and \cite{UR}), we see in this case that
\begin{align*}
dS_{n-k}(\Omega,x)=\sigma_{n-k}(h_{ij}+h\delta_{ij})dx,\quad x\in S^{n-1},
\end{align*}
where $dx$ is the Lebesgue measure on $S^{n-1}$, $h$ is the support function of $\Omega$, $h_{ij}$ is the second covariant derivative of $h$ with respect to the local orthonormal frame $\{e_1,e_2,\cdots,e_{n-1}\}$ on $S^{n-1}$ and $\sigma_{n-k}(h_{ij}+h\delta_{ij})$ is the $(n-k)$-th elementary symmetric function of
the eigenvalues of $(h_{ij}+h\delta_{ij})$ and $\delta_{ij}$ is the Kronecker delta. Thus
\begin{align}\label{eq106}
\widetilde{T}_k(\Omega)=\frac{1}{k(n+2)}\int_{S^{n-1}}h(\Omega,x)|Du(\nu_\Omega^{-1}(x))|^{k+1}\sigma_{n-k}(h_{ij}+h\delta_{ij})dx.
\end{align}

Motivated by foregoing successful studies on the torsional rigidity and the torsional measures, we now define the $k$-torsional measure $\mu^{tor}_k(\Omega,\cdot)$ of $\Omega\in\mathcal{K}_o^n$ as follows. A variational formula for the $k$-torsional rigidity will be established in Section \ref{sec3} which can also induce the $k$-torsional measure. In fact, the $k$-torsional measure is the differential of the $k$-torsional rigidity.

Define
\begin{align}\label{eq107}
\mu^{tor}_k(\Omega,\eta)=\int_{\eta}|Du(\nu_\Omega^{-1}(x))|^{k+1}dS_{n-k}(\Omega,x),
\end{align}
then
\begin{align}\label{eq108}
d\mu^{tor}_k(\Omega,\cdot)=|Du(\nu_\Omega^{-1}(x))|^{k+1}\sigma_{n-k}(h_{ij}+h\delta_{ij})dx.
\end{align}

Based on the $k$-torsional measure, we can naturally propose the normalised Minkowski problem for the $k$-torsional rigidity.
\begin{problem}\label{pro11}
Let $1\leq k\leq n-1$. Given a nonzero finite Borel measure $\mu$ on $S^{n-1}$, under what the necessary and sufficient conditions on $\mu$, does there exist an unique convex body $\Omega$ and a positive constant $\tau$ such that $\tau\mu_k^{tor}(\Omega,\cdot)=\mu$?
\end{problem}

If the given measure $\mu$ is absolutely continuous with respect to the Lebesgue measure and $\mu$ has a smooth density function $f:S^{n-1}\rightarrow (0,\infty)$. From (\ref{eq108}), then solving Problem \ref{pro11} can be equivalently viewed as solving the following nonlinear partial differential equation on $S^{n-1}$:
\begin{align*}
f(x)=\frac{d\mu}{dx}=\frac{\tau|Du(\nu_\Omega^{-1}(x))|^{k+1}\sigma_{n-k}(h_{ij}+h\delta_{ij})dx}{dx}.
\end{align*}
In the present paper, we solve Problem \ref{pro11} by studying the following nonlinear equation:
\begin{align}\label{eq109}
f(x)=\tau|Du(\nu_\Omega^{-1}(x))|^{k+1}\sigma_{n-k}(h_{ij}+h\delta_{ij}).
\end{align}

If $k=1$, Problem \ref{pro11} is the Minkowski problem of the torsional rigidity, correspondingly, (\ref{eq109}) is a partial differential equation of the Minkowski problem for the torsional rigidity.

In the present paper, we will investigate the existence of smooth non-even solutions to the normalized Minkowski problem for the $k$-torsional rigidity by the method of curvature flows which contains the Gauss curvature flow and mean curvature flow etc., where the Gauss curvature flow was first introduced and studied by Firey \cite{FI} to model the shape change of worn stones. Since then, the Gauss curvature flow has been widely used to find smooth solutions of  Minkowski problems. In general, the existence of smooth solutions to the corresponding Minkowski problems can be obtained by curvature flows under appropriate assumption conditions on density function $f$. For example, Liu and Lu \cite{LY} solved the dual Orlicz-Minkowski problem and obtained the existence of smooth solutions under the condition of the restricted density function by a Gauss curvature flow. Moreover, Chen, Huang and Zhao \cite{CC} attained the existence of smooth solutions to the $L_p$ dual Minkowski problem under the condition of even $f$ by a Gauss curvature flow. In addition,  the mean curvature flow play a crucial role in the study of geometric inequalities. Wang, Weng and Xia \cite{WGF} constructed a new locally constrained curvature flow to obtain Alexandrov-Fenchel inequalities for convex hypersurfaces in the half-space with capillary boundary. Wei and Xiong \cite{WY} gave new proof of a class of Alexandrov-Fenchel inequalities for anisotropic mixed volumes of smooth convex domains in Euclidean space by a volume-preserving anisotropic mean curvature type flow. Cui and Zhao \cite{CJS} constructed and confirmed  the sharp Micheal-Simon inequalities by a class of mean curvature type flows. The research results on curvature flows and their applications are extremely fruitful. In particular, the mean curvature flow, Ricci curvature flow, and so on. One can refer to \cite{AB, BA, CH, CK0, HJ, HU, LR, MR,ZX0} and the references therein for details.

In this article, we confirm the existence of smooth non-even solutions to the Minkowski problem for the $k$-torsional rigidity by method of a curvature flow. Let $\partial\Omega_0$ be a smooth, closed and strictly convex hypersurface in $\mathbb{R}^n$ containing the origin in its interior and $f$ is a positive smooth function on $S^{n-1}$. We construct and consider the long-time existence and convergence of a following curvature flow which is a family of convex hypersurfaces $\partial\Omega_t$ parameterized by smooth maps $X(\cdot ,t):
S^{n-1}\times (0, \infty)\rightarrow \mathbb{R}^n$
satisfying the initial value problem:
\begin{align}\label{eq110}
\left\{
\begin{array}{lc}
\frac{\partial X(x,t)}{\partial t}=\frac{\langle X,v\rangle}{f(v)}|D u(X,t)|^{k+1}\sigma_{n-k}
(x,t)v-\eta(t)X(x,t),  \\
X(x,0)=X_0(x),\\
\end{array}
\right.
\end{align}
where $\sigma_{n-k}(x,t)$ is the $(n-k)$-th ($1\leq k\leq n-1$) elementary symmetric function for principal curvature radii, $v=x$ is the
outer unit normal at $X(x,t)$, $\langle X,v \rangle$ represents the standard inner product of $X$ and $v$, $\eta(t)$ is given by
\begin{align}\label{eq111}
\eta(t)=\frac{\int_{S^{n-1}}h(x,t)\sigma_{n-k}(x,t)|D u(X,t)|^{k+1}dx}{\int_{S^{n-1}}h(x,t)f(x)dx}.
\end{align}

For convenience, we introduce the following functional which is very important for $C^0$ estimate of solution to the curvature flow (\ref{eq110}).
\begin{align}\label{eq112}
\Phi(\Omega_t)=\int_{S^{n-1}}h(x,t)f(x)dx.
\end{align}

We obtain the long-time existence and convergence of the flow (\ref{eq110}) in this article, see Theorem \ref{thm12} for details.

\begin{theorem}\label{thm12}
Let $1\leq k\leq n-1$, $u(x,t)$ be a smooth admissible solution of (\ref{eq101}) in $\Omega_t$ and $\partial\Omega_0$ be a smooth, closed and strictly convex hypersurface in $\mathbb{R}^n$ containing the origin in its interior, $f$ is a positive smooth function on $S^{n-1}$. Then the flow (\ref{eq110}) has an unique smooth convex solution $\partial\Omega_t=X(S^{n-1},t)$. Moreover, when $t\rightarrow\infty$, there is a subsequence of $\partial\Omega_t$ converges in $C^{\infty}$ to a smooth, closed and strictly convex hypersurface $\partial\Omega_\infty$, the support function of convex body $\Omega_\infty$ enclosed by $\partial\Omega_\infty$ satisfies (\ref{eq109}).
\end{theorem}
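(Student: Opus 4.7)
\emph{Proof proposal.} I would follow the standard curvature-flow blueprint---parabolic reformulation, a priori $C^0,C^1,C^2,C^\infty$ estimates, long-time existence, and subsequential convergence---adapted to the $k$-torsional setting. Since the tangential component of the flow (\ref{eq110}) only reparameterizes the hypersurface, the induced scalar equation on $S^{n-1}$ is
$$
\partial_t h(x,t)=\frac{h(x,t)}{f(x)}|Du|^{k+1}\sigma_{n-k}(h_{ij}+h\delta_{ij})-\eta(t)\,h(x,t),
$$
where $u=u(\cdot,t)$ solves (\ref{eq101}) on $\Omega_t$. Differentiating $\Phi(\Omega_t)$ in (\ref{eq112}) and substituting this evolution shows that the normalization (\ref{eq111}) is precisely the one making $\Phi$ a conserved quantity of the flow. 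Invariance of $\Phi$, together with convexity and the hypothesis $f>0$, immediately gives a uniform upper bound $h\le C_0$ and hence a uniform diameter bound for $\Omega_t$.

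The crux---and, as the abstract emphasizes, the main obstacle---is the $C^0$ lower bound $h\ge c_0>0$ without imposing restrictive size conditions on $f$. My plan is to introduce an auxiliary functional $\mathcal{J}(\Omega_t)$ built from $\widetilde{T}_k(\Omega_t)$ and $\Phi(\Omega_t)$ whose time derivative along the flow has a definite sign; the required monotonicity should come from a H\"older inequality applied to $\eta(t)\Phi=\int_{S^{n-1}}h|Du|^{k+1}\sigma_{n-k}\,dx$, combined with the $(n+2)k$-homogeneity of $T_k$ noted after (\ref{eq105}). Once $\mathcal{J}$ is monotone and bounded, a degeneration argument excludes $\min h\to 0$: a collapse of $\Omega_t$ toward the origin would force incompatible scalings of the two controlled quantities, yielding a uniform $c_0>0$. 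This is the step I expect to be technically delicate, since the whole point is to avoid the growth/size hypotheses on $f$ that a naive test-function argument would require.

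Once $0<c_0\le h\le C_0$, the $C^1$ bound for $h$ follows from convexity, while $|Du|^{k+1}$ on $\partial\Omega_t$ is bounded above and away from zero by standard interior and boundary gradient estimates for the $k$-Hessian equation (\ref{eq101}) on domains with uniformly controlled geometry. For the $C^2$ estimates, differentiating the scalar equation twice and exploiting the concavity of $\sigma_{n-k}^{1/(n-k)}$ on the G\aa rding cone (which holds precisely when $1\le k\le n-1$), a Tso-type maximum-principle argument gives uniform two-sided bounds on the principal radii of curvature. Krylov--Safonov then produces $C^{2,\alpha}$ estimates, Schauder bootstrapping upgrades to uniform $C^\infty$ bounds, and parabolic continuation delivers a unique smooth solution on $[0,\infty)$.

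For the convergence, the monotone bounded functional $\mathcal{J}$ converges, so $\frac{d\mathcal{J}}{dt}\to 0$ along some sequence $t_j\to\infty$. Reading back the sign analysis used to establish its monotonicity, this degeneracy forces $\partial_t h(\cdot,t_j)\to 0$ in $L^2(S^{n-1})$. Combining the uniform $C^\infty$ bounds with Arzel\`a--Ascoli, a subsequence $\partial\Omega_{t_j}$ converges in $C^\infty$ to a smooth, closed, strictly convex $\partial\Omega_\infty$ whose support function satisfies
$$
\frac{h_\infty(x)}{f(x)}|Du_\infty|^{k+1}\sigma_{n-k}(h_{\infty,ij}+h_\infty\delta_{ij})=\eta_\infty\,h_\infty(x),
$$
which, after canceling $h_\infty>0$, is exactly (\ref{eq109}) with $\tau=1/\eta_\infty>0$, completing the scheme.
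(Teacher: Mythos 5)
Your proposal follows essentially the same route as the paper: the scalar parabolic reformulation, conservation of $\Phi(\Omega_t)$ for the $C^0$ upper bound, a monotone functional for the lower bound, gradient bounds for $u$ from the $k$-Hessian theory, concavity of $\sigma_{n-k}^{1/(n-k)}$ with Tso-type auxiliary functions for the two-sided curvature bounds, Krylov regularity for long-time existence, and convergence of the bounded monotone functional to force the stationary equation along a subsequence. The auxiliary functional $\mathcal{J}$ you propose to construct is in the paper simply $\widetilde{T}_k(\Omega_t)$ itself (with $\Phi$ held constant), whose monotonicity is proved by exactly the H\"older-inequality computation you describe and whose lower bound $\widetilde{T}_k(\Omega_t)\ge\widetilde{T}_k(\Omega_0)$, combined with the uniform bound on $|Du|$, is what the paper uses to rule out $\min h\to 0$.
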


This paper is organized as follows. We collect some necessary background materials about convex bodies in Section \ref{sec2}. In Section \ref{sec3}, we establish a Hadamard variational formula for the $k$-torsional rigidity and discuss properties of two geometric functionals along the flow (\ref{eq110}). In Section \ref{sec4}, we give the priori estimates for solutions to the flow (\ref{eq110}). We obtain the long-time existence and convergence of the flow (\ref{eq110}) and complete the proof of Theorem \ref{thm12} in Section \ref{sec5}.

\section{\bf Preliminaries}\label{sec2}
In this subsection, we give a brief review of some relevant notions and terminologies.

\subsection{Convex hypersurface} (see \cite{SC,UR}) Let $\mathbb{R}^n$ be the $n$-dimensional Euclidean space, and let $S^{n-1}$ be the unit sphere in $\mathbb{R}^n$. The origin-centered unit ball $\{y\in\mathbb{R}^n:|y|\leq 1\}$ is always denoted by $B$. Write $\omega_n$ for the volume of $B$ and recall that its surface area is $n\omega_n$.

Let $\partial\Omega$ be a smooth, closed and strictly convex hypersurface containing the origin in its interior. The support function of a convex body $\Omega$ enclosed by $\partial\Omega$ is defined by
\begin{align*}h_\Omega(x)=h(\Omega,x)=\max\{x\cdot y:y\in\Omega\},\quad \forall x\in S^{n-1},\end{align*}
and the radial function of $\Omega$ with respect to $o$ (origin) $\in\mathbb{R}$ is defined by
\begin{align*}\rho_{\Omega}(v)=\rho(\Omega,v)=\max\{c>0:cv\in\Omega\},\quad  v\in S^{n-1}.\end{align*}
We easily obtain that the support function is homogeneous of degree $1$ and the radial function is homogeneous of degree $-1$.

For a convex body $\Omega\in\mathbb{R}^n$, its support hyperplane with an outward unit normal vector $\xi\in S^{n-1}$ is represented by
\begin{align*}
H(\Omega,\xi)=\{y\in\mathbb{R}^n:y\cdot \xi=h(\Omega,\xi)\}.
\end{align*}
A boundary point of $\Omega$ which only has one supporting hyperplane is called a regular point, otherwise, it is a singular point. The set of singular points is denoted as $\sigma \Omega$, it is
well known that $\sigma \Omega$ is a set with a spherical Lebesgue measure 0.

For a Borel set $\eta\subset S^{n-1}$, its surface area measure is defined as
\begin{align*}S(\Omega,\eta)=\mathcal{H}^{n-1}(\nu^{-1}_{\Omega}(\eta)),\end{align*}
where $\mathcal{H}^{n-1}$ is the $(n-1)$-dimensional Hausdorff measure. The Gauss map $\nu_\Omega:y\in\partial \Omega\setminus \sigma \Omega\rightarrow S^{n-1}$ is represented by
\begin{align*}\nu_\Omega(y)=\{\xi\in S^{n-1}:y\cdot \xi=h_\Omega(\xi)\}.\end{align*}
Here, $\partial \Omega\setminus \sigma \Omega$ is abbreviated as $\partial^\prime\Omega$, something we will often do.

Correspondingly, for a Borel set $\eta\subset S^{n-1}$, its inverse Gauss map is denoted by $\nu_\Omega^{-1}$,
\begin{align*}\nu_\Omega^{-1}(\eta)=\{y\in\partial \Omega:\nu_\Omega(y)\in\eta\}.\end{align*}
For $g\in C(S^{n-1})$, there holds
\begin{align*}
\int_{\partial\Omega\setminus \sigma\Omega}g(\nu_\Omega(y))d\mathcal{H}^{n-1}(y)=\int_{S^{n-1}}g(v)dS(\Omega,v).
\end{align*}

Suppose that $\Omega$ is parameterized by the inverse Gauss map $X:S^{n-1}\rightarrow \Omega$, that is $X(x)=\nu_\Omega^{-1}(x)$. Then the support function $h$ of $\Omega$ can be computed by
\begin{align}\label{eq201}h(x)=x\cdot X(x) , \ \ x\in S^{n-1},\end{align}
where $x$ is the outer normal of $\Omega$ at $X(x)$. Let $\{e_1,\cdots,e_{n-1}\}$ be an orthogonal frame on $S^{n-1}$. Let $\nabla$ be the gradient operator on $S^{n-1}$. Differentiating (\ref{eq201}), we arrive at
\begin{align*}
\nabla_ih=\langle \nabla_ix,X(x)\rangle + \langle x,\nabla_iX(x)\rangle.
\end{align*}
Since $\nabla_iX(x)$ is tangent to $\partial\Omega$ at $X(x)$, we have
\begin{align*}
\nabla_ih=\langle \nabla_ix,X(x)\rangle.
\end{align*}
It follows that
\begin{align}\label{eq202}
\overline{\nabla} h=\nabla h+hx=X(x).
\end{align}
$\overline{\nabla} h$ is regarded essentially as the point on $\partial\Omega$ whose outer unit normal vector is $x\in S^{n-1}$.

Denote by $h_i$ and $h_{ij}$ the first and second order covariant derivatives of $h$
on $S^{n-1}$, then computing as in \cite{JE0}, one can get
\begin{align}\label{eq203}
X(x)=h(x)_ie_i+h(x)x,\ \ \ \ X_i(x)=\omega_{ij}e_j,
\end{align}
where $\omega_{ij}=h_{ij}+h\delta_{ij}$. Note that we use the summation convention for the repeated indices here and after.

Since $\Omega$ is of class $C_+^2$, the matrix $\omega_{ij}$ is symmetric and positive definite. For convenience, we denote
\begin{align*}
\mathbf{H}=\{h\in C^2(S^{n-1}):(h_{ij}+h\delta_{ij})~~\text{is positive definite}\}.
\end{align*}
Indeed, $\mathbf{H}$ is formed exactly by support functions of convex bodies of class $C_+^2$.

In the following, we let $\Omega$ and $\Omega^\prime$ be two convex domains of $C^2_+$, and let $h$ and $\theta$ be support functions of $\Omega$ and $\Omega^\prime$, respectively. Denote $h_t=h+t\theta$ by the support function of $\Omega_t\hat{=}\Omega+t\Omega^\prime$, then for $|t|$ small enough, $h+t\theta\in\mathbf{H}$ as well. Hence, $\Omega_t$ is a convex domain of class $C^2_+$ with support function $h_t=h+t\theta$.

\subsection{Symmetric functions and Hessian operators} For $k\in \{1,\cdots,n\}$, the $k$-th elementary symmetric function of $A$ is defined as
\begin{align*}
S_k(A)=S(\lambda_1,\cdots,\lambda_n)=\sum_{1\leq i_1<\cdots<i_k\leq n}\lambda_{i_1}\cdots\lambda_{i_k},
\end{align*}
where $A=(a_{ij})$ is a matrix in the space $\mathcal{S}_n$ of the real symmetric $n\times n$ matrices and $\lambda_1,\cdots,\lambda_n$ are the eigenvalues of $A$. Notice that $S_k(A)$ is just the sum of all $k\times k$ principal minors of $A$. In particularly, $S_1(A)=\rm tr A$ is the trace of $A$ and $S_n(A)=\det(A)$ is its determinant.

The operator $S_k^{\frac{1}{k}}$, for $k\in \{1,\cdots,n\}$ is homogeneous of degree $1$ and it is increasing and concave if it is restricted to
\begin{align*}
\Gamma_k=\{A\in\mathcal{S}_n: S_i(A)> 0~~\text{for}~~ i=1,\cdots,k\}.
\end{align*}

Denoting by
\begin{align*}
S_k^{ij}(A)=\frac{\partial}{\partial a_{ij}}S_k(A),
\end{align*}
Euler identity for homogeneous functions shows that there holds
\begin{align*}
S_k(A)=\frac{1}{k}S_k^{ij}(A)a_{ij},
\end{align*}
we adopt  the Einstein summation convention for repeated indices here and full text.

Let $\Omega$ be an open subset of $\mathbb{R}^n$ and let $u\in C^2(\Omega)$. The $k$-Hessian operator $S_k(D^2u)$ is defined as the $k$-th elementary symmetric function of $D^2u$. Note that
\begin{align*}
S_1(D^2u)=\Delta u \quad \text{and} \quad S_n(D^2u)=\det(D^2 u).
\end{align*}
For $k>1$, the $k$-Hessian operators are fully nonlinear and, in general, not elliptic, unless it is restricted to the class of $k$-convex functions:
\begin{align*}
\Phi_k^2(\Omega)=\{u\in C^2(\Omega):S_i(D^2u)\geq 0~\text{in}~\Omega, i=1,2,\cdots,k\}.
\end{align*}
Notice that $\Phi^2_n(\Omega)$ coincides with class of $C^2(\Omega)$ convex functions.

A direct computation yields that $(S_k^{1j}(D^2u),\cdots,S_k^{nj}(D^2(u))$ is divergence free (see \cite{RE}), namely,
\begin{align*}
\frac{\partial}{\partial x_i}S_k^{ij}=0,
\end{align*}
hence, $S_k(D^2u)$ can be written in divergence form
\begin{align*}
S_k(D^2u)=\frac{1}{k}S_k^{ij}(D^2u)u_{ij}=\frac{1}{k}(S_k^{ij}(D^2u)u_j)_i,
\end{align*}
where subscripts $i,j$ stand for partial differentiations. For example, when $k=1$, we have $S_1^{ij}=\delta_{ij}$ and $S_1(D^2u)=\delta_{ij}u_{ij}$.

Let $\Omega$ be a bounded connected domain of $\mathbb{R}^n$ of class $C^2$ having principal curvatures $\kappa=(\kappa_1,\cdots,\kappa_{n-1})$ and outer unit normal vector $v_x$. For $k=1,\cdots,n-1$, we define the $k$-th curvature of $\partial \Omega$ by
\begin{align*}
\sigma_k(\partial \Omega)=\sigma_k(\kappa_1,\cdots,\kappa_{n-1}).
\end{align*}
Moreover, we set
\begin{align*}
\sigma_0=S_0\equiv 1, \quad \sigma_n\equiv 0.
\end{align*}
For example, $\sigma_1$ is equal to $(n-1)$-time the mean curvature of $\partial \Omega$, while $\sigma_{n-1}$ is the Gauss curvature of $\partial \Omega$.

In analogy with the case of functions, $\Omega$ is said $k$-convex, with $ k\in \{1,\cdots,{n-1}\}$, if $\sigma_j\geq 0$ for $j = 1,\cdots,k$ at every point $y=\partial \Omega$. We recall here that any sublevel set of a $k$-convex function is $(k-1)$-convex (see \cite{CA}).

In general, for $1\leq k\leq n$, a straightforward calculation yields
\begin{align*}
S_k(D^2u)=\sigma_k|Du|^k+\frac{S_ku_iu_lu_{lj}}{|Du|^2}.
\end{align*}
In addition, the following pointwise identity holds (see \cite{RE})
\begin{align*}
\sigma_{k-1}=\frac{S_k^{ij}(D^2u)u_iu_j}{|Du|^{k+1}}.
\end{align*}

\section{\bf Variational formula and associated functionals of the geometric flow}\label{sec3}
In this subsection, we give the scalar form of the geometric flow (\ref{eq110}) and provide a Hadamard variational formula of the $k$-torsional rigidity $\widetilde{T}_k(\Omega)$, as well as discuss geometric characteristic of associated functionals along the flow to solve the  Minkowski problem for the $k$-torsional rigidity with $1\leq k\leq n-1$.

Taking the scalar product of both sides of the equation and of the initial condition in the flow (\ref{eq110}) by $v$, by means of the definition of support function (\ref{eq201}), we describe the flow (\ref{eq110}) with the support function as the following quantity equation,
\begin{align}\label{eq301}
\left\{
\begin{array}{lc}
\frac{\partial h(x,t)}{\partial t}=\frac{h}{f(x)}|D u(X(x,t),t)|^{k+1}\sigma_{n-k}
(x,t)-\eta(t)h(x,t), \\
h(x,0)=h_0(x).\\
\end{array}
\right.
\end{align}

Firstly, we provide a Hadamard variational formula of the $k$-torsional rigidity $\widetilde{T}_k(\Omega)$.
\begin{lemma}\label{lem31}
Let $\Omega$ and $\Omega^\prime$ be two convex domains of $C^2_+$, and $h$ and $\theta$ be support functions of $\Omega$ and $\Omega^\prime$, respectively. Let $\Omega_t\hat{=}\Omega+t\Omega^\prime$ with support function $h_t=h+t\theta$. Suppose $u(X,t)$ is the solution to (\ref{eq101}) in $\Omega_t$. Then we have
\begin{align}\label{eq302}
\frac{d}{dt}\widetilde{T}_k(\Omega_t)\bigg|_{t=0}=&\frac{1}{k}\int_{S^{n-1}}\theta(x)|Du(X(x))|^{k+1}\sigma_{n-k}(h_{ij}(x)+h(x)\delta_{ij})dx\\
\nonumber=&\frac{1}{k}\int_{S^{n-1}}\theta(x)d\mu_k^{tor}(\Omega,x).
\end{align}
Here, $\mu_k^{tor}(\Omega,\cdot)$ is the $k$-torsional measure of $\Omega$, see (\ref{eq107}).
\end{lemma}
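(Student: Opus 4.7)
The plan is to reduce $\widetilde{T}_k$ to a volume integral and then shape-differentiate. Combining \eqref{eq105} with $\widetilde{T}_k=T_k^{1/k}$ gives the clean identity
\[
\widetilde{T}_k(\Omega)=-\int_{\Omega}u\,dy,
\]
where $u$ solves \eqref{eq101} in $\Omega$. Differentiating this along the Minkowski deformation $\Omega_t=\Omega+t\Omega'$ via Reynolds' transport formula and observing that the boundary contribution vanishes because $u_t|_{\partial\Omega_t}=0$, one obtains
\[
\frac{d}{dt}\widetilde{T}_k(\Omega_t)\Big|_{t=0}=-\int_{\Omega}\dot{u}\,dy,
\]
where $\dot{u}:=\partial_t u_t|_{t=0}$ denotes the shape derivative of the family of solutions.

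The heart of the argument is to convert this interior integral into a boundary integral via the divergence structure of the $k$-Hessian. Using $1=S_k(D^2u)=\tfrac{1}{k}(S_k^{ij}u_j)_i$ (valid by the divergence-free identity $(S_k^{ij})_i=0$ combined with Euler's theorem, both recalled in Section~\ref{sec2}), a first integration by parts produces
\[
\int_{\Omega}\dot{u}\,dy=-\frac{1}{k}\int_{\Omega}\dot{u}_i S_k^{ij}u_j\,dy+\frac{1}{k}\int_{\partial\Omega}\dot{u}\,S_k^{ij}u_j\nu_i\,d\mathcal{H}^{n-1}.
\]
The interior term vanishes: differentiating $S_k(D^2u_t)=1$ in $t$ yields the linearized equation $S_k^{ij}\dot{u}_{ij}=0$, and integrating $\int_\Omega u(S_k^{ij}\dot{u}_j)_i\,dy$ by parts gives $-\int_\Omega u_i S_k^{ij}\dot{u}_j\,dy$ on one side (the boundary piece drops by $u|_{\partial\Omega}=0$) and $0$ on the other (the integrand $u\cdot[(S_k^{ij})_i\dot{u}_j+S_k^{ij}\dot{u}_{ij}]$ is pointwise zero). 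Hence $\int_\Omega u_i S_k^{ij}\dot{u}_j\,dy=0$, and the symmetry of $S_k^{ij}$ converts this into the desired interior term.

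It then remains to identify the boundary data. Parametrizing $\partial\Omega_t$ by its inverse Gauss map $X_t(x)=\nabla h_t(x)+h_t(x)x$ with $h_t=h+t\theta$, the normal component of $\partial_t X_t|_{t=0}$ at the point of outer unit normal $x$ is exactly $\theta(x)$ since $\nabla\theta\perp x$. Differentiating $u_t(X_t(x))=0$ at $t=0$, and using $Du=|Du|\,x$ on $\partial\Omega$ (which follows from $u|_{\partial\Omega}=0$ together with $u<0$ in the interior), produces $\dot{u}=-|Du|\,\theta(x)$ on $\partial\Omega$. Substituting $u_j=|Du|\nu_j$ into the pointwise identity $\sigma_{k-1}=S_k^{ij}u_iu_j/|Du|^{k+1}$ recalled at the end of Section~\ref{sec2} yields $S_k^{ij}u_j\nu_i=\sigma_{k-1}(\kappa)|Du|^k$ on $\partial\Omega$, with $\kappa_1,\dots,\kappa_{n-1}$ the principal curvatures of $\partial\Omega$.

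Finally, I pull the resulting boundary integral back to $S^{n-1}$ via the Gauss map, whose Jacobian is $\det(h_{ij}+h\delta_{ij})$, and use the standard identity $\sigma_{k-1}(\kappa)\det(h_{ij}+h\delta_{ij})=\sigma_{n-k}(h_{ij}+h\delta_{ij})$ relating elementary symmetric functions of the principal curvatures and of the principal radii of curvature, thereby recognizing the $(n-k)$-th area measure appearing in \eqref{eq108}; assembling the pieces delivers \eqref{eq302}. The main technical obstacle is justifying the shape-differentiation step: one must know that $u_t$ depends smoothly on $t$ in a topology strong enough (e.g.\ $C^{2,\alpha}$ after pulling $\Omega_t$ back to $\Omega$ via a smooth diffeomorphism) so that $\dot{u}$ is a genuine function and the two integrations by parts are legitimate. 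This rests on classical solvability and continuous-dependence results for the $k$-Hessian Dirichlet problem on smooth $(k-1)$-convex domains; once these are in hand, the algebraic reductions above are routine.
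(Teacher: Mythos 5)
Your route is genuinely different from the paper's. The paper works entirely on the sphere: it differentiates the integrand of \eqref{eq106}, decomposes the linearization $\mathcal{L}$ of $\mathcal{G}(h)=|Du|^{k+1}\sigma_{n-k}$ into three pieces, proves $\mathcal{L}$ is self-adjoint on $L^2(S^{n-1})$ (citing [HY0] for the hard piece $\mathcal{L}_3$), and then closes the argument with an Euler-homogeneity identity \eqref{eq308}. You instead start from $\widetilde{T}_k(\Omega)=-\int_\Omega u\,dy$ (which does follow from \eqref{eq105}), shape-differentiate, and use the divergence structure $S_k=\frac1k(S_k^{ij}u_j)_i$ together with the linearized equation $S_k^{ij}\dot u_{ij}=0$ to push everything to the boundary -- the Colesanti--Fimiani/Huang--Song--Xu strategy adapted to the $k$-Hessian. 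Your identifications of the boundary data ($\dot u=-|Du|\theta$, $S_k^{ij}u_j\nu_i=\sigma_{k-1}(\kappa)|Du|^k$, and $\sigma_{k-1}(\kappa)\det(h_{ij}+h\delta_{ij})=\sigma_{n-k}(h_{ij}+h\delta_{ij})$) are all correct, and your sign convention $u<0$ is the right one (the paper's ``$u>0$ in $\Omega_t$'' is a typo, since $u$ is subharmonic with zero boundary data). Your approach is more transparent about where the measure comes from, at the cost of having to justify the differentiability of $u_t$ in $t$, which you correctly flag.

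However, you do not actually land on \eqref{eq302}: assembling your pieces gives
\begin{align*}
\frac{d}{dt}\widetilde{T}_k(\Omega_t)\Big|_{t=0}=-\frac{1}{k}\int_{\partial\Omega}\dot u\,S_k^{ij}u_j\nu_i\,d\mathcal{H}^{n-1}
=\frac{1}{k}\int_{S^{n-1}}\theta\,|Du|^{k+1}\sigma_{n-k}(h_{ij}+h\delta_{ij})\,dx,
\end{align*}
i.e.\ $\frac1k\int\theta\,d\mu_k^{tor}$, which differs from the stated formula by the factor $1/k$; your final sentence silently drops it. You should not have claimed to recover \eqref{eq302} verbatim. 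In fact your constant is the consistent one: $\widetilde{T}_k$ is homogeneous of degree $n+2$, so taking $\theta=h$ in the variational formula must return $(n+2)\widetilde{T}_k$, and by \eqref{eq106} that equals $\frac1k\int h\,d\mu_k^{tor}$, not $\int h\,d\mu_k^{tor}$. The discrepancy traces to the paper's identity \eqref{eq308}: $\mathcal{G}(h)$ is homogeneous of degree $n+1$ (one from $|Du|^{k+1}$ per the scaling $u\mapsto\lambda^2u(\cdot/\lambda)$... more precisely $k+1$ from $|Du|^{k+1}$ and $n-k$ from $\sigma_{n-k}$), not $k(n+2)-1$; these coincide only for $k=1$. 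With the correct degree the paper's own argument also produces the factor $\frac1k$. So your method is sound and actually exposes a normalization error in the lemma as stated, but as written your proof contains an unacknowledged factor-of-$k$ jump at the last step.
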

\begin{proof}
The idea of proof is referenced in  \cite[Proposition 2.5]{HY0}. For convenience, we denote $W=\omega_{ij}(x)=h_{ij}(x)+h(x)\delta_{ij}$ and
\begin{align}\label{eq303}
\mathcal{G}(h_t)=|Du(X(x,t),t)|^{k+1}\sigma_{n-k}((h_t)_{ij}(x)+h_t(x)\delta_{ij}), \ \ \ \ x\in S^{n-1}.
\end{align}
Then we have the following result by (\ref{eq106}),
\begin{align}\label{eq304}
\frac{d}{dt}\widetilde{T}_k(\Omega_t)\bigg|_{t=0}=\frac{1}{k(n+2)}\int_{S^{n-1}}\bigg(\theta(x)\mathcal{G}(h)(x)+h(x)\frac{d}{dt}\mathcal{G}(h_t)\bigg|_{t=0}(x)\bigg)dx.
\end{align}
Now, we try to compute the second term on the right hand side of (\ref{eq304}). Note that
\begin{align}\label{eq304+}
d_{ij}=\frac{\partial\sigma_{n-k}(W(x))}{\partial\omega_{ij}}=\sum_{p=1}^{n}\frac{\partial\sigma_{n-k}(W(x))}{\partial\lambda_p}\frac{\partial\lambda_p}{\partial \omega_{ij}}=\sum_{p=1}^{n}\sigma_{k-1}^{(p)}(W)v_p^iv_p^j,
\end{align}
where $\sigma_{k-1}^{(p)}(\omega_{ij})$ is the $(k-1)$-order elementary symmetric polynomial after removing the $p$-th eigenvalue and $v_p=(v_p^1,v_p^2,\cdots,v_p^n)^T$ is unit eigenvector of $W$ (The corresponding eigenvalue is $\lambda_p$).
\begin{align}\label{eq305}
\frac{d}{dt}\mathcal{G}(h_t)\bigg|_{t=0}(x)=&|Du(X(x))|^{k+1}d_{ij}(x)(\theta_{ij}(x)+\theta(x)\delta_{ij})\\
\nonumber&+(k+1)|Du(X(x))|^k\frac{d}{dt}|Du(X(x,t),t)|\bigg|_{t=0}\sigma_{n-k}(\omega_{ij}(x)).
\end{align}
Since $u(X,t)=0$ on $\partial\Omega_t$ and $u(X,t)>0$ in $\Omega_t$, it gives that
\begin{align}\label{eq306}
|Du(X(x,t),t)|=-\langle Du(X(x,t),t),x\rangle.
\end{align}
Then
\begin{align*}
\frac{d}{dt}&|Du(X(x,t),t)|\bigg|_{t=0}=-\langle D^2u(X(x))\dot{X}(x),x\rangle-\langle D\dot{u}(X(x))x\rangle\\
=&-\langle D^2u(X(x))[\theta_i(x)e_i+\theta(x)x],x\rangle-\langle D\dot{u}(X(x))x\rangle\\
=&\frac{d_{ij}(x)}{\sigma_{n-k}(\omega_{ij}(x))}|Du(X(x))|_j\theta_i(x)-\langle D^2u(X(x))x,x)\rangle \theta(x)-\langle D\dot{u}(X(x))x\rangle,
\end{align*}
where we use (ii) of Lemma \ref{lem47}, and $\dot{X}(x)=\theta_i(x)e_i+\theta(x)x$ from (\ref{eq203}). Putting the above equality into (\ref{eq305}), we obtain
\begin{align*}
\frac{d}{dt}&\mathcal{G}(h_t)\bigg|_{t=0}(x)=[|Du(X(x))|^{k+1}d_{ij}(x)\theta_i(x)]_j\\
&+\bigg(|Du(X(x))|^{k+1}d_{ii}(x)-(k+1)|Du(X(x))|^k\sigma_{n-k}(\omega_{ij}(x))\langle D^2u(X(x))x,x\rangle\bigg)\theta(x)\\
&-(k+1)|Du(X(x))|^k\sigma_{n-k}(\omega_{ij}(x))\langle D\dot{u}(X(x)),x\rangle.
\end{align*}
Here, we use $d_{ij,j}=0$, see \cite{TR}.

Denote $\frac{d}{dt}\mathcal{G}(h_t)\bigg|_{t=0}(x)=\mathcal{L}\theta=\mathcal{L}_1\theta+\mathcal{L}_2\theta+\mathcal{L}_3\theta$ with
\begin{align*}
\mathcal{L}_1\theta &=[|Du(X(x))|^{k+1}d_{ij}(x)\theta_i(x)]_j,\\
\mathcal{L}_2\theta &=\bigg(|Du(X(x))|^{k+1}d_{ii}(x)-(k+1)|Du(X(x))|^k\sigma_{n-k}(\omega_{ij}(x))\langle D^2u(X(x))x,x\rangle\bigg)\theta(x),\\
\mathcal{L}_3\theta &=-(k+1)|Du(X(x))|^k\sigma_{n-k}(\omega_{ij}(x))\langle D\dot{u}(X(x)),x\rangle.
\end{align*}

We can see that $\mathcal{L}$ is self-adjoint on $L^2(S^{n-1})$, i.e.
\begin{align}\label{eq307}
\int_{S^{n-1}}\theta^1\mathcal{L}\theta^2=\int_{S^{n-1}}\theta^2\mathcal{L}\theta^1.
\end{align}
In fact, $\mathcal{L}_2$ is self-adjoint obviously. $\mathcal{L}_1$ can be easily seen to be self-adjoint with an integration by parts. And $\mathcal{L}_3$ is self-adjoint by \cite[Lemma 2.3]{HY0}.

By the $(n+1)$-homogeneity of $\mathcal{G}(h)$, it yields that
\begin{align}\label{eq308}
\mathcal{L}h=(n+1)\mathcal{G}(h).
\end{align}
Therefore, using (\ref{eq307}) and (\ref{eq308}), (\ref{eq304}) gives that
\begin{align*}
\frac{d}{dt}\widetilde{T}_k(\Omega_t)\bigg|_{t=0}(x)=&\frac{1}{k(n+2)}\int_{S^{n-1}}\bigg(\theta(x)\mathcal{G}(h)(x)+h(x)\mathcal{L}\theta(x)\bigg)dx\\
=&\frac{1}{k(n+2)}\int_{S^{n-1}}\bigg(\theta(x)\mathcal{G}(h)(x)+\theta(x)\mathcal{L}h(x)\bigg)dx\\
=&\frac{1}{k(n+2)}\int_{S^{n-1}}\bigg(\theta(x)\mathcal{G}(h)(x)+\theta(x)(n+1)\mathcal{G}(h)\bigg)dx\\
=&\frac{1}{k}\int_{S^{n-1}}\theta(x)\mathcal{G}(h)(x)dx\\
=&\frac{1}{k}\int_{S^{n-1}}\theta(x)d\mu_k^{tor}(\Omega,x).
\end{align*}
\end{proof}

Next, we discuss the  geometric characteristic of functionals $\widetilde{T}_k(\Omega_t)$ and $\Phi(\Omega_t)$ with respect to the flow (\ref{eq110}). We will first show that the functional $\widetilde{T}_k(\Omega_t)$ is non-decreasing along the flow (\ref{eq110}).
\begin{lemma}\label{lem32}
The functional $\widetilde{T}_k(\Omega_t)$ is non-decreasing along the flow (\ref{eq110}). Namely, $\frac{\partial}{\partial t}\widetilde{T}_k(\Omega_t)\geq0$, the equality holds if and only if the support function of $\Omega_t$ satisfies (\ref{eq109}).
\end{lemma}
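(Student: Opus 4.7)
The plan is to differentiate $\widetilde{T}_k(\Omega_t)$ by way of the Hadamard variational formula of Lemma \ref{lem31}, substitute the scalar form (\ref{eq301}) of the flow, and then recognise the resulting expression as non-negative via a single application of the Cauchy--Schwarz inequality, with the sharp case pinned down exactly by (\ref{eq109}).

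First I would invoke Lemma \ref{lem31} in its instantaneous form. Although that lemma is stated for the linear deformation $h_t = h + t\theta$, its derivation relies only on pairing the current $k$-torsional measure with a first-order support-function velocity. Applied to the present flow it yields
\begin{align*}
\frac{d}{dt}\widetilde{T}_k(\Omega_t) = \int_{S^{n-1}}\frac{\partial h}{\partial t}(x,t)\, |Du(X(x,t),t)|^{k+1}\sigma_{n-k}(x,t)\, dx.
\end{align*}
Plugging in the right-hand side of (\ref{eq301}) together with the definition (\ref{eq111}) of $\eta(t)$ then rearranges this to
\begin{align*}
\frac{d}{dt}\widetilde{T}_k(\Omega_t) = \int_{S^{n-1}}\frac{h}{f}\bigl(|Du|^{k+1}\sigma_{n-k}\bigr)^2 dx \;-\; \frac{\Bigl(\int_{S^{n-1}} h\,|Du|^{k+1}\sigma_{n-k}\,dx\Bigr)^2}{\int_{S^{n-1}} hf\, dx}.
\end{align*}

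The right-hand side is manifestly a Cauchy--Schwarz remainder. Setting $F = \sqrt{h/f}\,|Du|^{k+1}\sigma_{n-k}$ and $G = \sqrt{hf}$, one has $FG = h\,|Du|^{k+1}\sigma_{n-k}$, $F^{2} = (h/f)(|Du|^{k+1}\sigma_{n-k})^{2}$ and $G^{2} = hf$, so $\bigl(\int FG\bigr)^{2} \leq \int F^{2} \int G^{2}$ directly delivers $\frac{d}{dt}\widetilde{T}_k(\Omega_t) \geq 0$. Equality forces $F \propto G$ almost everywhere, i.e.\ $|Du|^{k+1}\sigma_{n-k} = c\,f$ on $S^{n-1}$ for some constant $c > 0$, which is exactly (\ref{eq109}) with $\tau = 1/c$.

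The only point that requires care is the very first step: one must confirm that the variational identity of Lemma \ref{lem31} extends from the specific Minkowski-sum deformation to an arbitrary smooth one-parameter family of support functions generated by the flow. This should be immediate from the proof of Lemma \ref{lem31}, since the self-adjointness of the operator $\mathcal{L}$ and the $[k(n+2)-1]$-homogeneity of $\mathcal{G}$ used there are pointwise-in-$t$ properties that hold equally well with the velocity $\partial_t h$ in place of $\theta$; no convex-body structure on the variation itself is used.
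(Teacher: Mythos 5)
Your proposal is correct and follows essentially the same route as the paper: differentiate via the Hadamard variational formula of Lemma \ref{lem31}, substitute the scalar flow equation (\ref{eq301}) and the definition (\ref{eq111}) of $\eta(t)$, and conclude by the Cauchy--Schwarz (H\"older) inequality, with equality characterised by (\ref{eq109}). Your closing remark about extending Lemma \ref{lem31} from Minkowski-sum variations to the flow's velocity field is a point the paper passes over silently, and your justification of it is sound.
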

\begin{proof}
By Lemma \ref{lem31}, we know that
\begin{align*}
\frac{d}{dt}\widetilde{T}_k(\Omega_t)=\frac{1}{k}\int_{S^{n-1}}\frac{\partial h(x,t)}{\partial t}|Du(X(x,t),t)|^{k+1}\sigma_{n-k}(x,t)dx.
\end{align*}
Thus, from (\ref{eq301}), (\ref{eq111}) and H\"{o}lder inequality, we obtain the following result,
\begin{align*}
&\frac{d}{dt}\widetilde{T}_k(\Omega_t)\\
=&\frac{1}{k}\bigg(\int_{S^{n-1}}\frac{h}{f(x)}|Du|^{2(k+1)}(\sigma_{n-k})^2dx\\
&-\frac{\int_{S^{n-1}}h\sigma_{n-k}|D u|^{k+1}dx}{\int_{S^{n-1}}hf(x)dx}\int_{S^{n-1}}h|Du|^{k+1}\sigma_{n-k}dx\bigg)\\
=&\frac{1}{k}\bigg\{\frac{1}{\int_{S^{n-1}}hf(x)dx}\bigg[\int_{S^{n-1}}hf(x)dx\int_{S^{n-1}}\frac{h}{f(x)}|Du|^{2(k+1)}(\sigma_{n-k})^2dx\\
&-\bigg(\int_{S^{n-1}}h|Du|^{k+1}\sigma_{n-k}dx\bigg)^2\bigg]\bigg\}\\
=&\frac{1}{k}\bigg[\frac{1}{\int_{S^{n-1}}hf(x)dx}\bigg\{\bigg[\bigg(\int_{S^{n-1}}\bigg(\bigg(hf(x)\bigg)^{\frac{1}{2}}\bigg)^2dx\bigg)^{\frac{1}{2}}\\
&\bigg(\int_{S^{n-1}}\bigg(\bigg(\frac{h}{f(x)}|Du|^{2(k+1)}(\sigma_{n-k})^2\bigg)^{\frac{1}{2}}\bigg)^2dx\bigg)^{\frac{1}{2}}\bigg]^2
-\bigg(\int_{S^{n-1}}h\sigma_{n-k}|Du|^{k+1}dx\bigg)^2\bigg\}\bigg]\\
\geq&\frac{1}{k}\bigg(\frac{1}{\int_{S^{n-1}}hf(x)dx}\bigg\{\bigg[\int_{S^{n-1}}\bigg(hf(x)\bigg)^{\frac{1}{2}}\times
\bigg(\frac{h|Du|^{2(k+1)}(\sigma_{n-k})^2}{f(x)}\bigg)^{\frac{1}{2}}dx\bigg]^2\\
&-\bigg(\int_{S^{n-1}}h\sigma_{n-k}|Du|^{k+1}dx\bigg)^2\bigg\}\bigg)\\
=&0.
\end{align*}
By the equality condition of H\"{o}lder inequality, we know that the above equality holds if and only if $\bigg(hf(x)\bigg)^{\frac{1}{2}}=\tau\bigg(\frac{h|Du|^{2(k+1)}(\sigma_{n-k})^2}{f(x)}\bigg)^{\frac{1}{2}}$, i.e.
\begin{align*}
\tau\sigma_{n-k}|Du|^{k+1}=f(x).
\end{align*}
Namely, the support function of $\Omega_t$ satisfies (\ref{eq109}) with $\tau=\frac{1}{\eta(t)}$.
\end{proof}

Moreover, we prove the functional (\ref{eq112}) is unchanged along the flow (\ref{eq110}). Please refer to the following lemma for details.
\begin{lemma}\label{lem33}The functional (\ref{eq112}) is unchanged along the flow (\ref{eq110}). That is,  $\frac{d}{d t}\Phi(\Omega_t)=0$.
\end{lemma}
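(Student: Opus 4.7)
The plan is a direct calculation: differentiate $\Phi(\Omega_t)=\int_{S^{n-1}} h(x,t) f(x)\,dx$ under the integral sign, substitute the scalar form (\ref{eq301}) of the flow for $\partial_t h$, and observe that the global factor $\eta(t)$ defined in (\ref{eq111}) is precisely the ratio that makes the two resulting integrals cancel.

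More concretely, first I would pass $\frac{d}{dt}$ inside the integral (this is justified since along the flow $h(\cdot,t)$ is smooth and $f$ is a fixed smooth positive function on the compact manifold $S^{n-1}$), yielding
\begin{equation*}
\frac{d}{dt}\Phi(\Omega_t)=\int_{S^{n-1}}\frac{\partial h(x,t)}{\partial t}\,f(x)\,dx.
\end{equation*}
Then I would plug in
\begin{equation*}
\frac{\partial h}{\partial t}=\frac{h}{f(x)}|Du(X(x,t),t)|^{k+1}\sigma_{n-k}(x,t)-\eta(t)\,h(x,t)
\end{equation*}
from (\ref{eq301}), so that the $f(x)$ in the denominator of the first term cancels against the weight $f(x)\,dx$ and one is left with
\begin{equation*}
\frac{d}{dt}\Phi(\Omega_t)=\int_{S^{n-1}} h\,|Du|^{k+1}\sigma_{n-k}\,dx\;-\;\eta(t)\int_{S^{n-1}} h\,f(x)\,dx.
\end{equation*}

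Finally, I would invoke the explicit formula (\ref{eq111}) for $\eta(t)$, namely
\begin{equation*}
\eta(t)\int_{S^{n-1}} h(x,t)\,f(x)\,dx=\int_{S^{n-1}} h(x,t)\,\sigma_{n-k}(x,t)\,|Du|^{k+1}\,dx,
\end{equation*}
to see that the two terms on the right-hand side are equal, hence $\frac{d}{dt}\Phi(\Omega_t)=0$. There is no real obstacle here: the result is essentially tautological, as the normalization $\eta(t)$ in the flow (\ref{eq110}) was chosen precisely so that $\Phi$ is a conserved quantity. The mild point worth noting is only the justification for differentiation under the integral, which follows from the smoothness and strict convexity assumptions maintained along the flow (to be verified in the later sections) ensuring that $h$, $\sigma_{n-k}$, and $|Du|$ depend smoothly on $t$.
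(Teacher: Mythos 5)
Your proof is correct and is essentially identical to the paper's argument: both differentiate under the integral, substitute the scalar flow equation (\ref{eq301}) for $\partial_t h$, and observe that the definition (\ref{eq111}) of $\eta(t)$ forces the two resulting terms to cancel. Your added remark on justifying the differentiation under the integral sign is a minor refinement the paper leaves implicit.
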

\begin{proof} According to (\ref{eq112}), (\ref{eq111}) and (\ref{eq301}), we obtain
\begin{align*}
&\frac{\partial}{\partial t}\Phi(\Omega_t)\\
=&\int_{S^{n-1}}f(x)\frac{\partial h}{\partial t}dx\\
=&\int_{S^{n-1}}f(x)\bigg(\frac{h}{f(x)}|Du|^{k+1}\sigma_{n-k}-\eta(t)h\bigg)dx\\
=&\int_{S^{n-1}}h\sigma_{n-k}|Du|^{k+1}dx-\frac{\int_{S^{n-1}}h\sigma_{n-k}|Du|^{k+1}dx}{\int_{S^{n-1}}hf(x)dx}\int_{S^{n-1}}hf(x)dx\\
=&0.
\end{align*}
This ends the proof of Lemma \ref{lem33}.
\end{proof}

\section{\bf A priori estimates}\label{sec4}

In this subsection, we establish the $C^0$, $C^1$ and $C^2$ estimates for solutions to the equation (\ref{eq301}). In the following of this paper, we always assume that $\partial\Omega_0$ is a smooth, closed and strictly convex hypersurface in $\mathbb{R}^n$ containing the origin in its interior. $h:S^{n-1}\times [0,T)\rightarrow \mathbb{R}$ is a smooth solution to the equation (\ref{eq301}) with the initial $h(\cdot,0)$ the support function of $\partial\Omega_0$. Here, $T$ is the maximal time for existence of smooth solutions to the equation (\ref{eq301}).

\subsection{$C^0$, $C^1$ estimates}

In order to complete the $C^0$ estimate, we firstly need to introduce the following lemma for convex bodies.
\begin{lemma}\label{lem41}\cite[Lemma 2.6]{CH}
Let $\Omega\in\mathcal{K}^n_o$, $h$ and $\rho$ are respectively the support function and the radial function of $\Omega$, and $x_{\max}$ and $\xi_{\min}$ are two points such that
$h(x_{\max})=\max_{S^{n-1}}h$ and $\rho(\xi_{\min})=\min_{S^{n-1}}\rho$. Then
\begin{align*}
\max_{S^{n-1}}h=&\max_{S^{n-1}}\rho \quad \text{and} \quad \min_{S^{n-1}}h=\min_{S^{n-1}}\rho;\end{align*}
\begin{align*}h(x)\geq& x\cdot x_{\max}h(x_{\max}),\quad \forall x\in S^{n-1};\end{align*}
\begin{align*}\rho(\xi)\xi\cdot\xi_{\min}\leq&\rho(\xi_{\min}),\quad \forall \xi\in S^{n-1}.\end{align*}
\end{lemma}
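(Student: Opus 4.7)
The plan is to exploit the defining relations $h(x) = \max\{x \cdot y : y \in \Omega\}$ and $\rho(\xi)\xi \in \partial\Omega$, together with the fact that $0 \in \mathrm{int}(\Omega)$ makes both $h$ and $\rho$ positive and continuous on $S^{n-1}$. The universal pointwise comparison $h(\xi) \geq \xi \cdot (\rho(\xi)\xi) = \rho(\xi)$ is the workhorse.

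First I would prove the identity $h(x_{\max}) = \rho(x_{\max})$. Let $y^* \in \Omega$ attain the maximum in $h(x_{\max}) = x_{\max} \cdot y^*$, and set $\xi^* := y^*/|y^*|$. The chain
\[
\max_{S^{n-1}} h \;=\; h(x_{\max}) \;=\; x_{\max} \cdot y^* \;\leq\; |y^*| \;\leq\; h(\xi^*)
\]
forces equality throughout, so $\xi^* = x_{\max}$ and $|y^*| = h(x_{\max})$. This gives $\rho(x_{\max}) \geq h(x_{\max})$, which together with $h \geq \rho$ pointwise yields $h(x_{\max}) = \rho(x_{\max})$ and hence $\max h = \max \rho$. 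A symmetric argument for the minimum observes that the inscribed ball of radius $\rho(\xi_{\min})$ centred at the origin sits inside $\Omega$ and touches $\partial\Omega$ at $\rho(\xi_{\min})\xi_{\min}$, so the outward unit normal to $\partial\Omega$ at that point is $\xi_{\min}$ and $h(\xi_{\min}) = \xi_{\min} \cdot \rho(\xi_{\min})\xi_{\min} = \rho(\xi_{\min}) = \min h = \min \rho$. This settles the first line.

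For the second line, the identity $h(x_{\max}) = \rho(x_{\max})$ just established means that $p := h(x_{\max}) x_{\max} = \rho(x_{\max}) x_{\max}$ lies on $\partial\Omega \subset \Omega$, so the definition of the support function immediately delivers $h(x) \geq x \cdot p = (x \cdot x_{\max}) h(x_{\max})$ for every $x \in S^{n-1}$. For the third line, I would work with the supporting hyperplane at $q := \rho(\xi_{\min})\xi_{\min}$, which has outward normal $\xi_{\min}$ by the inscribed-ball tangency above; this places $\Omega$ inside the half-space $\{y : y \cdot \xi_{\min} \leq \rho(\xi_{\min})\}$, and evaluating at the boundary point $\rho(\xi)\xi \in \Omega$ produces the stated comparison between $\rho(\xi)(\xi \cdot \xi_{\min})$ and $\rho(\xi_{\min})$. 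Equivalently, (iii) is the polar-dual restatement of (ii) via $h_{\Omega^*} = 1/\rho_{\Omega}$ and $\rho_{\Omega^*} = 1/h_{\Omega}$ for $\Omega^* \in \mathcal{K}^n_o$.

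The main obstacle is not depth but rigour: the tangency/outer-normal identification at $q = \rho(\xi_{\min})\xi_{\min}$ must be justified by a first-order minimality argument for $\rho$ at $\xi_{\min}$ (or, as noted, via polar duality). Every other step is direct manipulation of the inner product together with the definitions of $h$ and $\rho$.
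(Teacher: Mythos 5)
Your argument is correct and is the standard one; the paper itself offers no proof of this lemma (it is quoted from \cite{CH}), so there is nothing to compare against beyond noting that your route --- the pointwise bound $h\ge\rho$, equality in Cauchy--Schwarz at $x_{\max}$, and the inscribed-ball tangency at $\rho(\xi_{\min})\xi_{\min}$ --- is exactly how this is usually done, and your sketched tangency step does close rigorously (any supporting hyperplane of $\Omega$ at $q=\rho(\xi_{\min})\xi_{\min}$ also supports the ball $B(0,\rho(\xi_{\min}))\subset\Omega$ at $q$, and equality in $q\cdot\nu\le|q|$ forces $\nu=\xi_{\min}$).

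One point you should make explicit rather than elide: your half-space argument yields
\begin{align*}
\rho(\xi)\,\xi\cdot\xi_{\min}\;\le\;\rho(\xi_{\min}),\qquad\forall\,\xi\in S^{n-1},
\end{align*}
i.e.\ the inequality with $\le$, not the $\ge$ printed in the statement. The printed direction is false (take $\Omega$ the unit ball, where it would assert $\xi\cdot\xi_{\min}\ge1$ for all $\xi$); the $\le$ version is the correct one and is what appears in \cite{CH}. Writing ``produces the stated comparison between $\rho(\xi)(\xi\cdot\xi_{\min})$ and $\rho(\xi_{\min})$'' without committing to a direction hides this; state the inequality you actually prove and note the sign discrepancy with the lemma as quoted. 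The polar-duality remark is consistent with this: applying the second inequality to $\Omega^*$ with $h_{\Omega^*}=1/\rho_\Omega$ again gives the $\le$ direction.
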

\begin{lemma}\label{lem42}
Let $\partial\Omega_t$ be a smooth strictly convex solution to the flow (\ref{eq110}) in $\mathbb{R}^n$ and $u(X,t)$ be the smooth admissible solution of (\ref{eq101}) in $\Omega_t$, $f$ is a positive smooth function on $S^{n-1}$. Then there is a positive constant $C$ independent of $t$ such that
\begin{align}\label{eq401}
\frac{1}{C}\leq h(x,t)\leq C, \ \ \forall(x,t)\in S^{n-1}\times[0,T),
\end{align}
\begin{align}\label{eq402}
\frac{1}{C}\leq \rho(v,t)\leq C, \ \ \forall(v,t)\in S^{n-1}\times[0,T).
\end{align}
Here, $h(x,t)$ and $\rho(v,t)$ are the support function and the radial function of $\Omega_t$, respectively.
\end{lemma}
\begin{proof}
Due to  $\rho(v,t)v=\nabla h(x,t)+h(x,t)x$. Clearly, one sees
\begin{align*}
\min_{S^{n-1}} h(x,t)\leq \rho (v,t)\leq \max_{S^{n-1}} h(x,t).
\end{align*}
This implies that the estimate (\ref{eq401}) is tantamount to the estimate (\ref{eq402}). Thus, we only need to estimate (\ref{eq401}) or (\ref{eq402}).

Firstly, we prove the uniform  lower bound of $h(x,t)$. Here, we denote a concave increasing function $\mathcal{F}(y),y\in(0,a](a>0)$ and $\lim_{\epsilon\rightarrow 0}\int_\epsilon^a\mathcal{F}(y)dy=-\infty$ (for example, $\mathcal{F}(y)=-\frac{1}{y^n}$). Now, we construct a new auxiliary function
\begin{align*}
\Phi_1(\Omega_t)=c+\mathcal{F}\bigg(\int_{S^{n-1}}h(x,t)f(x)dx\bigg),
\end{align*} 
where $c$ is a positive constant (independent of $t$) such that 
\begin{align*}
\Phi_1(\Omega_0)=c+\mathcal{F}\bigg(\int_{S^{n-1}}h(x,0)f(x)dx\bigg)=c_0>0.
\end{align*}
Then
\begin{align*}
\frac{\partial}{\partial t}\Phi_1(\Omega_t)
=\mathcal{F}^{\prime}\bigg(\int_{S^{n-1}}h(x,t)f(x)dx\bigg)\int_{S^{n-1}}f(x)\frac{\partial h}{\partial t}dx.
\end{align*}
From the proof of Lemma \ref{lem33}, we know that $\Phi_1(\Omega_t)$ is also unchanged along the flow (\ref{eq110}) with $t\in[0,T)$. We now denote $h_{\min_t}(x)$ (or $h_{\inf_t}(x)$) by the minimum value or infimum of $h(x,t)$ $w.r.t.$ time $t$. Here, for convenience and without loss of generality, we assume $\int_{S^{n-1}}f(x)dx=1$, since $\mathcal{F}$ is a concave increasing function, then by Jensen inequality, there is
\begin{align*}
\Phi_1(\Omega_0)=\Phi_1(\Omega_t)=c+\mathcal{F}\bigg(\int_{S^{n-1}}h(x,t)f(x)dx\bigg)\geq c+\int_{S^{n-1}}\mathcal{F}(h(x,t))f(x)dx.
\end{align*}
Thus
\begin{align*}
\Phi_1(\Omega_0)^{-1}\leq&\frac{1}{c+\int_{S^{n-1}}\mathcal{F}(h(x,t))f(x)dx}\\
\leq&\frac{1}{c+\int_{S^{n-1}}\mathcal{F}(h_{\min_t}(x))f(x)dx}.
\end{align*}
The foregoing inequality shows that $h_{\min_t}(x)\nrightarrow0$ (or $h_{\inf_t}(x)\nrightarrow 0$) for any $x\in S^{n-1}$. In fact, if there is a point $x_0\in S^{n-1}$ such that $h_{\min_t}(x)\rightarrow 0$ when $x\rightarrow x_0$ and denote the neighborhood of $x_0$ being with $U(x_0,\epsilon)\subset S^{n-1}$ and the measure $|U(x_0,\epsilon)|>0$ for any small $\varepsilon$. Thus we have
\begin{align*}
\Phi_1(\Omega_0)^{-1}
\leq&\frac{1}{c+\min\limits_{S^{n-1}}f(x)\int_{U(x_0,\epsilon)}\mathcal{F}(h_{\min_t}(x))dx+\int_{S^{n-1}\setminus U(x_0,\epsilon)}\mathcal{F}(h_{\min_t}(x))f(x)dx}\\
\rightarrow&\frac{1}{c+(-\infty)+\int_{S^{n-1}\setminus U(x_0,\epsilon)}\mathcal{F}(h_{\min_t}(x))f(x)dx}=0^-
\end{align*}
as $x\rightarrow x_0$. This is a contradictory with $\Phi_1(\Omega_0)^{-1}=\frac{1}{c_0}>0$, namely, there is no point that makes $h_{\min_t}(x)\rightarrow 0$. Thus one can take $\delta$ independent of $t$ small enough to draw that $h_{\min_t}(x)\geq\delta$. The same discussion applies to $h_{\inf_t}(x)\geq\delta$. The support function for low one-dimensional
convex bodies can be similarly proven. At the same time, we can obtain $\Omega_t$ is a convex body containing the origin in its interior point, i.e. $\Omega_t\in\mathcal{K}_o^n$.

Next, we will derive at the uniform upper bound of $h(x,t)$. We has attained $\Omega_t\in\mathcal{K}_o^n$, from Lemma \ref{lem41}, then we can write
\begin{align*}
h(x,t)\geq& x\cdot x^t_{\max}h(x^t_{\max},t),\quad \forall x\in S^{n-1},
\end{align*}
where $x^t_{\max}$ is the point such that $h(x^t_{\max},t)=\max_{S^{n-1}}h(\cdot,t)$. Thus, from Lemma \ref{lem33}, we obtain
\begin{align*}
\Phi(\Omega_0)=\Phi(\Omega_t)=&\int_{S^{n-1}}h(x,t)f(x)dx\\
\geq &\int_{S^{n-1}}f(x)[h(x^t_{\max},t)x\cdot x^t_{\max}]dx\\
\geq&\frac{1}{2}h(x^t_{\max},t)\int_{\{x\in S^{n-1}: x\cdot x^t_{\max}\geq\frac{1}{2}\}}f(x)dx\\
\geq&Ch(x^t_{\max},t).
\end{align*}
This yields
\begin{align*}
\sup h(x^t_{\max},t)\leq \frac{\Phi(\Omega_0)}{C},
\end{align*}
where $C$ is a positive constant independent of $t$.
\end{proof}

\begin{lemma}\label{lem43}Let $\partial\Omega_t$ be a smooth strictly convex solution to the flow (\ref{eq110}) in $\mathbb{R}^n$ and $u(X,t)$ be the smooth admissible solution of (\ref{eq101}) in $\Omega_t$, $f$ is a positive smooth function on $S^{n-1}$. Then there is a positive constant $C$ independent of $t$ such that
\begin{align}\label{eq403}|\nabla h(x,t)|\leq C,\quad\forall(x,t)\in S^{n-1}\times [0,T),\end{align}
and
\begin{align}\label{eq404}|\nabla \rho(v,t)|\leq C,\quad \forall(v,t)\in S^{n-1}\times [0,T).\end{align}
\end{lemma}

\begin{proof}
The desired results immediately follow from Lemma \ref{lem42} and the following identities (see e.g. \cite{LR})
\begin{align*}
h=\frac{\rho^2}{\sqrt{\rho^2+|\nabla\rho|^2}},\qquad\rho^2=h^2+|\nabla h|^2.\end{align*}
\end{proof}

\begin{lemma}\label{lem44}Let $\partial\Omega_t$ be a smooth strictly convex solution to the flow (\ref{eq110}) in $\mathbb{R}^n$ and $u(X,t)$ be the smooth admissible solution of (\ref{eq101}) in $\Omega_t$, $f$ is a positive smooth function on $S^{n-1}$. Then there is a positive constant $C$ independent of $t$ such that
\begin{align*}\frac{1}{C}\leq\eta(t)\leq C.\end{align*}
\end{lemma}

\begin{proof}
From the definition of $\eta(t)$, Lemma \ref{lem32} and Lemma \ref{lem33}, we can directly obtain the lower bound of $\eta(t)$, namely,
\begin{align*}
\eta(t)=\frac{\int_{S^{n-1}}h(x,t)\sigma_{n-k}(x,t)|D u(X,t)|^{k+1}dx}{\int_{S^{n-1}}h(x,t)f(x)dx}=\frac{k(n+2)\widetilde{T}_k(\Omega_t)}{\Phi(\Omega_t)}\geq\frac{k(n+2)\widetilde{T}_k(\Omega_0)}{\Phi(\Omega_0)}.
\end{align*}

Since $\Omega_t$ is a smooth strictly convex body for any $t\in[0,T)$ and we have obtained uniform upper bound and uniform lower bound of $\Omega_t$ in Lemma \ref{lem42}. Thus, there exist balls $B_R$ and $B_r$ with radii of $R\leq R_0<\infty$ and $r\geq \delta>0$ such that $B_r\subset \Omega_t\subset B_R$, for balls $B_R$ and $B_r$, we have for any $x\in S^{n-1}$
\begin{align*}
\left\{
\begin{array}{lc}
S_k(D^2u_R(X(x)))=1\ \ \  \text{in} \ \ \ B_R,\\
u_R=0,\ \ \ \ \ \ \ \ \text{on} \ \ \ \ \partial B_R,\\
\end{array}
\right.
\end{align*}
and
\begin{align*}
\left\{
\begin{array}{lc}
S_k(D^2u_r(X(x)))=1\ \ \  \text{in} \ \ \ B_r,\\
u_r=0,\ \ \ \ \ \ \ \ \text{on} \ \ \ \ \partial B_r,\\
\end{array}
\right.
\end{align*}
The analysis of radial symmetric solutions provides an expression for the explicit solution \cite{NMA}, for example, for ball $B_R$, $u_R=c_{n,k}(R^2-|X(x)|^2)$, where $c_{n,k}$ depends on dimension $n$ and $k$, then $|Du_R|=2c_{n,k}R$. Similarly, $|Du_r|=2c_{n,k}r$.

Since $B_r\subset \Omega_t\subset B_R$ and $u=0$ on $\partial\Omega_t$, moreover, $u$ is a smooth admissible solution of (\ref{eq101}) on $\Omega_t$. For any $x\in S^{n-1}$ and $t\in[0,T)$, any point $X(x)\in\partial\Omega_t$, there exists ball $B_r$ such that $B_r\subset\Omega_t$ and $\partial\Omega_t\cap B_r=X(x)$. Because of the same equation and $u(\cdot,t)\geq 0=u_r(\cdot)$ on $\partial B_r$, hence using the maximum principle of $k$-Hessian equation \cite{CA}, we can obtain $u(\cdot,t)\geq u_r(\cdot)$ in $B_r$ and $u(X(x,t),t)=u_r(X(x))$, we have $|Du(X(x,t),t)|\geq|Du_r(X(x))|$. Similarly, we attain the upper bound $|Du(X(x,t),t)|\leq |Du_R(X(x))|$ by comparing it with $u_R$. Thus, we obtain
\begin{align*}
c\delta\leq |Du(X(x,t),t)|\leq CR_0,
\end{align*}
where $c$ and $C$ independent of $t$.

Consequently, by virtue of the uniform upper bounds of $h(x, t)$ and $|Du(X(x,t),t)|$, we arrive at immediately the upper bound of $\eta(t)$.
\end{proof}

\subsection{$C^2$ estimate}
In this subsection, we establish the upper bound and the lower bound of principal curvature. This will show that the equation (\ref{eq301}) is uniformly parabolic. Let us first state the following lemma before the $C^2$ estimate.

\begin{lemma}\label{lem45} Let $1\leq k\leq n-1$, $\Omega_t$ be a convex body of $C_+^2$ in $\mathbb{R}^n$ and $u(X(x,t),t)$ be the solution of (\ref{eq101}) in $\Omega_t$, then
\begin{flalign*}
\begin{split}
(i)&(D^2u(X(x,t),t)e_i)\cdot e_j=-\frac{1}{\sigma_{n-k}}|D u(X(x,t),t)|d_{ij}(x,t);\\
(ii)&(D^2u(X(x,t),t)e_i)\cdot x=-\frac{1}{\sigma_{n-k}}|D u(X(x,t),t)|_jd_{ij}(x,t);\\
(iii)&(D^2u(X(x,t),t)x)\cdot x=\frac{(n-k)|Du(X(x,t),t)|}{\sigma_{n-k}}.
\end{split}&
\end{flalign*}
Here, $e_i$ and $x$ are the orthogonal frame and the unite outer normal on $S^{n-1}$, $\cdot$ is the standard inner product and $d_{ij}$ is the cofactor matrix of $(h_{ij}+h\delta_{ij})$ with $\sum_{i,j}d_{ij}(h_{ij}+h\delta_{ij})=(n-k)\sigma_{n-k}$.
\end{lemma}

\begin{proof} (i)~~Assume that $h(x,t)$ is the support function of $\Omega_t$ for $(x,t)\in S^{n-1}\times (0,\infty)$ and let $\iota=\frac{\partial h}{\partial t}$. Then $X(x,t)=h_ie_i+hx, \frac{\partial X(x,t)}{\partial t}=\dot{X}(x,t)=\frac{\partial}{\partial t}(h_ie_i+hx)=\iota_ie_i+\iota x$. $X_i(x,t)=(h_{ij}+h\delta_{ij})e_j$, let $h_{ij}+h\delta_{ij}=\omega_{ij}$, then $X_{ij}(x,t)=\omega_{ijk}e_k-\omega_{ij}x$, where $\omega_{ijk}$ is the covariant derivatives of $\omega_{ij}$.

From $u(X,t)=0$ on $\partial\Omega_t$, we can not difficult to obtain
\begin{align*}
Du\cdot X_i=0,
\end{align*}
and
\begin{align*}((D^2u)X_j)X_i+Du X_{ij}=0.\end{align*}
It follows that
\begin{align}\label{eq405}
\omega_{ik}\omega_{jl}(((D^2u)e_l)\cdot e_k)+\omega_{ij}|Du|=0.
\end{align}
Multiplying both sides of (\ref{eq405}) by $d_{ij}$, we have
\begin{align*}
d_{ij}\omega_{ik}\omega_{jl}(((D^2 u)e_l)\cdot e_k)+\sigma_{n-k}|Du|=0.
\end{align*}
Namely,
\begin{align*}
\delta_{jk}\sigma_{n-k}\omega_{jl}(((D^2 u)e_l)\cdot e_k)+\sigma_{n-k}|Du|=0.
\end{align*}
It yields
\begin{align*}
\omega_{ij}(((D^2 u)e_i)\cdot e_j)+|Du|=0,
\end{align*}
then
\begin{align*}
d_{ij}\omega_{ij}(((D^2 u)e_i)\cdot e_j)+d_{ij}|Du|=0,
\end{align*}
i.e.
\begin{align*}
\sigma_{n-k}(((D^2 u)e_i)\cdot e_j)+d_{ij}|Du|=0,
\end{align*}
thus,
\begin{align*}
((D^2 u)e_i)\cdot e_j=-\frac{1}{\sigma_{n-k}}d_{ij}|Du|.
\end{align*}
This gives proof of (i).

(ii)~~Recall that
\begin{align*}
|Du(X(x,t),t)|=-Du(X(x,t),t)\cdot x,
\end{align*}
taking the covariant of both sides for above formula, we obtain
\begin{align}\label{eq406}
|Du|_j=-Du\cdot e_j-(D^2 u)X_j\cdot x=-\omega_{ij}((D^2 u)e_i\cdot x).
\end{align}
Multiplying both sides of (\ref{eq406}) by $d_{lj}$ and combining
\begin{align*}
d_{lj}\omega_{ij}=\delta_{li}\sigma_{n-k}.
\end{align*}
We conclude that
\begin{align*}
d_{ij}|Du|_j=-\sigma_{n-k}(D^2u)e_i\cdot x.
\end{align*}
Hence,
\begin{align*}
((D^2u)e_i)\cdot x=-\frac{1}{\sigma_{n-k}}d_{ij}|Du|_j.
\end{align*}
This proves (ii).

(iii)~~Hessian matrix $D^2u$ can be decomposed into
\begin{align*}
D^2u=\sum_{i,j=1}^{n-1}u_{ij}e_i\otimes e_j+u_{xx}x\otimes x,
\end{align*}
where tangential component $u_{ij}=(D^2ue_i)\cdot e_j=-\frac{|Du|}{\sigma_{n-k}}d_{ij}$ from (i), and normal component $u_{xx}=(D^2ux)\cdot x$.

From (\ref{eq101}), we know that
\begin{align*}
1=S_k(D^2u)=\frac{1}{k}S_k^{ij}(D^2u)u_{ij},
\end{align*}
where $S_k^{ij}$ is the partial derivative of $S_k$ about $u_{ij}$ (elements of the cofactor matrix), and
\begin{align*}
S_k^{ij}(D^2u)u_{ij}=S_k^{xx}(D^2u)u_{xx}+\sum_{i,j=1}^{n-1}S_k^{ij}(D^2u)u_{ij}.
\end{align*}
Since $S_k^{xx}(D^2u)$ is the partial derivative of $S_k$ about $u_{xx}$, and $S_k$ is linear about $u_{xx}$ (because $S_k$ is a symmetric function of eigenvalues), then
\begin{align*}
S_k^{xx}(D^2u)=S_{k-1}\text{(tangential part)},
\end{align*}
similarly,
\begin{align*}
S_k^{ij}(D^2u)=\frac{S_k}{\partial u_{ij}}=\text{elements of cofactor matrix}.
\end{align*}

\begin{align*}
D^2_{\tan}u=-\frac{|Du|}{\sigma_{n-k}}\sum_{ij}d_{ij}e_i\otimes e_j,
\end{align*}
\begin{align*}
S_l(\text{tangential part})=-\frac{(-1)^l|Du|^l}{\sigma^l_{n-k}}\sum_{ij}S_l(d_{ij}).
\end{align*}
Notice that $d_{ij}$ is the cofactor matrix of $(h_{ij}+h\delta_{ij})$ and
\begin{align*}
\sum_{i,j}d_{ij}(h_{ij}+h\delta_{ij})=(n-k)\sigma_{n-k},
\end{align*}
and
\begin{align*}
S_{k-1}(d_{ij})=(n-k)\sigma_{n-k-1},\quad S_k(d_{ij})=\sigma_{n-k}.
\end{align*}
Then
\begin{align*}
1=\frac{1}{k}[S_{k-1}(\text{tangential part})u_{xx}+S_k(\text{tangential part})].
\end{align*}
Computing
\begin{align*}
S_{k-1}(\text{tangential part})=\frac{(-1)^{k-1}|Du|^{k-1}}{\sigma_{n-k}^{k-1}}(n-k)\sigma_{n-k-1},
\end{align*}
and
\begin{align*}
S_k(\text{tangential part})=\frac{(-1)^k|Du|^k}{\sigma_{n-k}^k}\sigma_{n-k}=\frac{(-1)^k|Du|^k}{\sigma_{n-k}^{k-1}}.
\end{align*}
Thus,
\begin{align*}
1=\frac{1}{k}\bigg(\frac{(-1)^{k-1}(n-k)\sigma_{n-k-1}|Du|^{k-1}}{\sigma_{n-k}^{k-1}}u_{xx}+\frac{(-1)^k|Du|^k}{\sigma_{n-k}^{k-1}}\bigg),
\end{align*}
namely,
\begin{align*}
1=\frac{(-1)^{k-1}(n-k)\sigma_{n-k-1}|Du|^{k-1}}{k\sigma_{n-k}^{k-1}}u_{xx}+\frac{(-1)^k|Du|^k}{\sigma_{n-k}^{k-1}}.
\end{align*}
Note that
\begin{align*}
\frac{(-1)^{k-1}(n-k)\sigma_{n-k-1}|Du|^{k-1}}{k\sigma_{n-k}^{k-1}}u_{xx}=1-\frac{(-1)^k|Du|^k}{\sigma_{n-k}^{k-1}}.
\end{align*}
Suppose $k$ is a odd number (similar verifiable even numbers), then
\begin{align*}
u_{xx}=\frac{(n-k)|Du|}{\sigma_{n-k}}.
\end{align*}
Hence, the normal Hessian component is
\begin{align*}
(D^2u(X(x,t),t)x)\cdot x=u_{xx}=\frac{(n-k)|Du(X(x,t),t)|}{\sigma_{n-k}}.
\end{align*}
This completes proof of (iii).
\end{proof}

Now, we establish the lower bound of $\sigma_{n-k}(x,t)$.
\begin{lemma}\label{lem46} Under the conditions of Lemma \ref{lem42}, then there is a positive constant $C_0$ independent of $t$ such that
\begin{align*}
\sigma_{n-k}\geq C_0.
\end{align*}
\end{lemma}

\begin{proof}Consider a following auxiliary function, which  first appeared in \cite{KH},
\begin{align*}
E=&\log\bigg(\frac{h}{f(x)}|Du|^{k+1}\sigma_{n-k}\bigg)-A\frac{\rho^2}{2},
\end{align*}
where $A$ is a positive constant which will be chosen later.

Denote $\frac{h}{f(x)}|Du|^{k+1}\sigma_{n-k}=G\sigma_{n-k}=F$ and $\frac{\partial h(x,t)}{\partial t}=h_t$, then $h_t=F-\eta(t)h$ by (\ref{eq301}). Thus, the evolution equation of $E$ is written as
\begin{align*}
\frac{\partial E}{\partial t}=&\frac{1}{F}\frac{\partial F}{\partial t}-A\frac{\partial(\frac{\rho^2}{2})}{\partial t}.
\end{align*}
We compute the evolution equation of $F$,
\begin{align*}
\frac{\partial F}{\partial t}=\sigma_{n-k}\frac{\partial G}{\partial t}+G\frac{\partial\sigma_{n-k}}{\partial t},
\end{align*}
where
\begin{align*}
\frac{\partial G}{\partial t}=\frac{1}{f(x)}\bigg(|Du|^{k+1}h_t+(k+1)h|Du|^k\frac{\partial |Du|}{\partial t}\bigg).
\end{align*}

Since $|Du(X(x,t),t)|=-\langle Du(X(x,t),t),x\rangle$, $X(x,t)=h_ke_k+hx$, then
\begin{align}\label{eq407}
\frac{\partial |Du(X(x,t),t)|}{\partial t}=-[\langle (D^2u)x,(h_{tk}e_k+h_tx)\rangle+\langle D\dot{u},x\rangle].
\end{align}
From $u(X(x,t),t)=0$ on $\partial\Omega_t$, taking the derivative of both sides with respect to $t$, then we obtain
\begin{align*}
\dot{u}+Du \frac{\partial X(x,t)}{\partial t}=0,
\end{align*}
thus,
\begin{align}\label{eq408}
\dot{u}=-Du\cdot (h_{tk}e_k+h_tx)=|Du|x\cdot (h_{tk}e_k+h_tx)=|Du|h_t(x).
\end{align}
From (\ref{eq408}), we further calculate
\begin{align}\label{eq409}
\nonumber\langle D\dot{u},x\rangle=&\langle D(|Du|h_t),x\rangle=(\langle |Du|^{-1}Du D^2u,x\rangle)h_t+\langle|Du|(\nabla h)_t,x\rangle\\
\nonumber=&(\langle |Du|^{-1}Du D^2u,x\rangle)h_t+\langle|Du|(h_ke_k+hx)_t,x\rangle\\
=&(\langle |Du|^{-1}Du D^2u,x\rangle)h_t+|Du|h_t.
\end{align}
Substituting (\ref{eq409}) into (\ref{eq407}), we obtain
\begin{align}\label{eq410}
\nonumber&\frac{\partial |Du(X(x,t),t)|}{\partial t}\\
\nonumber=&-h_{tk}\langle (D^2u)x,e_k\rangle-h_t \langle(D^2u)x,x\rangle -(\langle |Du|^{-1}Du D^2u,x\rangle)h_t-|Du|h_t\\
=&-(h_t)_k\langle (D^2u)x,e_k\rangle-\bigg(\langle(D^2u)x,x\rangle +(\langle |Du|^{-1}Du D^2u,x\rangle)+|Du|\bigg)h_t.
\end{align}
Thus, combining (\ref{eq410}), we obtain
\begin{align}\label{eq411}
&\frac{\partial G}{\partial t}\sigma_{n-k}\\
\nonumber=&\frac{\sigma_{n-k}}{f(x)}\bigg\{|Du|^{k+1}h_t\\
\nonumber&+(k\!+\!1)h|Du|^k\bigg[\!-\!(h_t)_k\langle (D^2u)x,e_k\rangle-\bigg(\langle(D^2u)x,x\rangle+(\langle |Du|^{-1}Du D^2u,x\rangle)+|Du|\bigg)h_t\!\bigg]\!\bigg\}.
\end{align}

Recall $\sigma_{n-k}(x,t)=\sigma_{n-k}(\omega_{ij}(x,t))$, $\omega_{ij}(x,t)=h_{ij}(x,t)+h(x,t)\delta_{ij}$ and $d_{ij}=\frac{\partial \sigma_{n-k}}{\partial \omega_{ij}}$, then
\begin{align}\label{eq412}
\nonumber\frac{\partial \sigma_{n-k}}{\partial t}=&d_{ij}\frac{\partial \omega_{ij}}{\partial t}=d_{ij}\nabla_{ij}(h_t)+d_{ij}h_t\delta_{ij}\\
\nonumber=&d_{ij}\nabla_{ij}(F-\eta(t)h)+d_{ij}\delta_{ij}(F-\eta(t)h)\\
\nonumber=&d_{ij}F_{ij}-\eta(t)d_{ij}h_{ij}+d_{ij}\delta_{ij}(F-\eta(t)h)\\
\nonumber=&d_{ij}F_{ij}+Fd_{ij}\delta_{ij}-\eta(t)d_{ij}(h_{ij}+h\delta_{ij})\\
=&d_{ij}F_{ij}+Fd_{ij}\delta_{ij}-(n-k)\eta(t)\sigma_{n-k},
\end{align}
where we use the $(n-k)$-degree homogeneity of $\sigma_{n-k}$ in the last equality and obtain $d_{ij}\omega_{ij}=(n-k)\sigma_{n-k}$.

We know that $\rho^2=h^2+|\nabla h|^2$, thus,
\begin{align}\label{eq413}
\nonumber\frac{\partial(\frac{\rho^2}{2})}{\partial t}=&\frac{1}{2}\frac{\partial(h^2+|\nabla h|^2)}{\partial t}=hh_t+\sum h_kh_{kt}\\
=&h(F-\eta(t)h)+\sum h_k(F_k-\eta(t)h_k)=hF+\sum h_kF_k-\eta(t)\rho^2.
\end{align}

Combining (\ref{eq411}), (\ref{eq412}) and (\ref{eq413}), we get
\begin{align*}
\frac{\partial E}{\partial t}=&\frac{1}{F}\bigg\{\frac{\sigma_{n-k}}{f(x)}\bigg[|Du|^{k+1}h_t\\
&+(k\!+\!1)h|Du|^k\bigg(\!-\!(h_t)_k\langle (D^2u)x,e_k\rangle\!-\!\bigg(\!\langle(D^2u)x,x\rangle \!+\!(\langle |Du|^{-1}Du D^2u,x\rangle)\!+\!|Du|\bigg)h_t\!\bigg)\!\bigg]\\
&+G\bigg(d_{ij}F_{ij}+Fd_{ij}\delta_{ij}-(n-k)\eta(t)\sigma_{n-k}\bigg)\bigg\}-A[hF+\sum h_kF_k-\eta(t)\rho^2]\\
=&\frac{\sigma_{n-k}}{f(x)}\bigg[\frac{|Du|^{k+1}(F-\eta(t)h)}{F}-\frac{(k+1)h|Du|^k((F-\eta(t)h))_k\langle (D^2u)x,e_k\rangle}{F}\\
&-\frac{(k+1)h|Du|^k\bigg(\langle(D^2u)x,x\rangle +(\langle |Du|^{-1}Du D^2u,x\rangle)+|Du|\bigg)(F-\eta(t)h)}{F}\bigg]\\
&+\frac{G}{F}\bigg(d_{ij}F_{ij}+Fd_{ij}\delta_{ij}-(n-k)\eta(t)\sigma_{n-k}\bigg)-A[hF+\sum h_kF_k-\eta(t)\rho^2].
\end{align*}
Suppose the spatial minimum of $E$ is attained at a point $(x_0,t)$, then $F_k=0$, $F_{ij}\geq 0$, thus the following result is obtained by using Lemma \ref{lem45}, dropping some positive terms and rearranging terms
\begin{align*}
\frac{\partial E}{\partial t}\geq &\frac{\sigma_{n-k}}{f(x)}\bigg[\frac{|Du|^{k+1}(e^{E+A\frac{\rho^2}{2}}-\eta(t)h)}{e^{E+A\frac{\rho^2}{2}}}+\frac{(k+1)\eta(t)h|Du|^kh_k\langle (D^2u)x,e_k\rangle}{e^{E+A\frac{\rho^2}{2}}}\\
&-\frac{(k+1)h|Du|^k\bigg(\langle(D^2u)x,x\rangle +(\langle |Du|^{-1}Du D^2u,x\rangle)+|Du|\bigg)(e^{E+A\frac{\rho^2}{2}}-\eta(t)h)}{e^{E+A\frac{\rho^2}{2}}}\bigg]\\
&-(n-k)\eta(t)+A\frac{\rho^2}{2}-A\bigg(he^{E+A\frac{\rho^2}{2}}-\frac{\eta(t)\rho^2}{2}\bigg)\\
\geq&\frac{\sigma_{n-k}}{f(x)}\frac{e^{E+A\frac{\rho^2}{2}}-\eta(t)h}{e^{E+A\frac{\rho^2}{2}}}\bigg[|Du|^{k+1}h(1-(k+1))-2(k+1)h|Du|^k|D^2u|\bigg]\\
&+\frac{\eta(t)\rho^2}{2}\bigg(A-\frac{2(n-k)}{\rho^2}\bigg)+A\bigg(\frac{\eta(t)\rho^2}{2}-he^{E+A\frac{\rho^2}{2}}\bigg).
\end{align*}

We have obtained the uniform bound of $|Du(X(x,t),t)|$ in proof Lemma \ref{lem44}, by virtue of Schauder's theory (see example Chapter 17 in \cite{GI}), thus, we are easy to obtain $|D^2u(X(x,t),t)|\leq \widehat{C}$ on $S^{n-1}\times [0,T)$.

Now, choose $A>\max\frac{2(n-k)}{\rho^2}$. Thus, if $E$ becomes very negative, denote
\begin{align*}
L_1=\frac{e^{E+A\frac{\rho^2}{2}}-\eta(t)h}{e^{E+A\frac{\rho^2}{2}}}<0,
\end{align*}
since $-k<0$, then
\begin{align*}
L_2=|Du|^{k+1}h(1-(k+1))-2(k+1)h|Du|^k|D^2u|<0,
\end{align*}
\begin{align*}
L_3=+\frac{\eta(t)\rho^2}{2}\bigg(A-\frac{2(n-k)}{\rho^2}\bigg)>0,
\end{align*}
\begin{align*}
L_4=A\bigg(\frac{\eta(t)\rho^2}{2}-he^{E+A\frac{\rho^2}{2}}\bigg)>0,
\end{align*}
Then 
\begin{align*}
\frac{\partial E}{\partial t}\geq \frac{\sigma_{n-k}}{f(x)}L_1L_2+L_3+L_3>0.
\end{align*} 
Thus $E$ has the lower bound. Therefore we obtain the lower bound of $\sigma_{n-k}$.
\end{proof}

We will establish the upper bound of $\sigma_{n-k}$ in the next lemma. The proof needs a property of $\sigma_{n-k}$, this is
$d_{ij}\omega_{im}\omega_{jm}\geq (n-k)(\sigma_{n-k})^{1+\frac{1}{n-k}}$. (see \cite{AB0} for details.)

\begin{lemma}\label{lem47} Under the conditions of Lemma \ref{lem42}, then there is a positive constant $C_1$ independent of $t$ such that
\begin{align*}
\sigma_{n-k}\leq C_1.
\end{align*}
\end{lemma}
\begin{proof}
Considering an auxiliary function which was first introduced by Ivaki \cite{IV} and subsequently applied to Orlicz-Minkowski flows \cite{BR1}.

\begin{align*}
M=\frac{1}{1-\beta\frac{\rho^2}{2}}\frac{G\sigma_{n-k}}{h},
\end{align*}
where $G=\frac{h|Du|^{k+1}}{f(x)}$ and $\beta$ is a positive constant such that $2\beta\leq\rho^2\leq\frac{2}{\beta}$ for all $t\in[0,T)$ (know from Lemma \ref{lem42}). Suppose $(x_1,t)$ is a  spatial maximum value point of $M$. Then at point $(x_1,t)$,
\begin{align}\label{eq414}
\nabla_iM=0, \quad \text{i.e.},\quad \frac{\beta}{1-\beta\frac{\rho^2}{2}}\nabla_i\bigg(\frac{\rho^2}{2}\bigg)\frac{G\sigma_{n-k}}{h}+\nabla_i\bigg(\frac{G\sigma_{n-k}}{h}\bigg)=0,
\end{align}
and
\begin{align}\label{eq415}
\nabla_{ij}M\leq 0.
\end{align}
Now, we estimate $M$, from (\ref{eq415}), we obtain
\begin{align}\label{eq416}
\frac{\partial M}{\partial t}\leq &\frac{\partial M}{\partial t}-Gd_{ij}\nabla_{ij}M\\
\nonumber=&\frac{\partial \bigg(\frac{1}{1-\beta\frac{\rho^2}{2}}\frac{G\sigma_{n-k}}{h}\bigg)}{\partial t}-Gd_{ij}\nabla_{ij}\bigg(\frac{1}{1-\beta\frac{\rho^2}{2}}\frac{G\sigma_{n-k}}{h}\bigg)\\
\nonumber=&\frac{1}{1-\beta\frac{\rho^2}{2}}\bigg[\frac{\partial\bigg(\frac{G\sigma_{n-k}}{h}\bigg)}{\partial t}-Gd_{ij}\nabla_{ij}\bigg(\frac{G\sigma_{n-k}}{h}\bigg)\bigg]\\
\nonumber&+\frac{\beta}{\bigg(1-\beta\frac{\rho^2}{2}\bigg)^2}\frac{G\sigma_{n-k}}{h}\bigg[\frac{\partial(\frac{\rho^2}{2})}{\partial t}-Gd_{ij}\nabla_{ij}\bigg(\frac{\rho^2}{2}\bigg)\bigg]\\
\nonumber&-2Gd_{ij}\frac{\beta}{\bigg(1-\beta\frac{\rho^2}{2}\bigg)^2}\nabla_j\bigg(\frac{G\sigma_{n-k}}{h}\bigg)\nabla_i\bigg(\frac{\rho^2}{2}\bigg)\\
\nonumber&-2Gd_{ij}\frac{\beta^2}{\bigg(1-\beta\frac{\rho^2}{2}\bigg)^3}\frac{G\sigma_{n-k}}{h}\nabla_i\bigg(\frac{\rho^2}{2}\bigg)\nabla_j\bigg(\frac{\rho^2}{2}\bigg)\\
\nonumber=&\frac{1}{1-\beta\frac{\rho^2}{2}}\bigg[\frac{\partial\bigg(\frac{G\sigma_{n-k}}{h}\bigg)}{\partial t}-Gd_{ij}\nabla_{ij}\bigg(\frac{G\sigma_{n-k}}{h}\bigg)\bigg]\\
\nonumber&+\frac{\beta}{\bigg(1-\beta\frac{\rho^2}{2}\bigg)^2}\frac{G\sigma_{n-k}}{h}\bigg[\frac{\partial(\frac{\rho^2}{2})}{\partial t}-Gd_{ij}\nabla_{ij}\bigg(\frac{\rho^2}{2}\bigg)\bigg]\\
\nonumber&-2Gd_{ij}\frac{\beta}{\bigg(1-\beta\frac{\rho^2}{2}\bigg)^2}\nabla_i\bigg(\frac{\rho^2}{2}\bigg)\bigg[\nabla_j\bigg(\frac{G\sigma_{n-k}}{h}\bigg)
+\frac{\beta}{1-\beta\frac{\rho^2}{2}}\frac{G\sigma_{n-k}}{h}\nabla_j\bigg(\frac{\rho^2}{2}\bigg)\bigg].
\end{align}
From (\ref{eq414}), we can simplify (\ref{eq416}) to
\begin{align}\label{eq417}
\frac{\partial M}{\partial t}\leq &\frac{1}{1-\beta\frac{\rho^2}{2}}\bigg[\frac{\partial\bigg(\frac{G\sigma_{n-k}}{h}\bigg)}{\partial t}-Gd_{ij}\nabla_{ij}\bigg(\frac{G\sigma_{n-k}}{h}\bigg)\bigg]\\
\nonumber&+\frac{\beta}{\bigg(1-\beta\frac{\rho^2}{2}\bigg)^2}\frac{G\sigma_{n-k}}{h}\bigg[\frac{\partial(\frac{\rho^2}{2})}{\partial t}-Gd_{ij}\nabla_{ij}\bigg(\frac{\rho^2}{2}\bigg)\bigg].
\end{align}
Now, we calculate
\begin{align*}
&\frac{\partial\bigg(\frac{G\sigma_{n-k}}{h}\bigg)}{\partial t}-Gd_{ij}\nabla_{ij}\bigg(\frac{G\sigma_{n-k}}{h}\bigg)\\
=&\frac{\sigma_{n-k}\frac{\partial G}{\partial t}+G\frac{\partial \sigma_{n-k}}{\partial t}}{h}-\frac{G\sigma_{n-k}h_t}{h^2}-Gd_{ij}\frac{\nabla_{ij}[G\sigma_{n-k}]}{h}+Gd_{ij}\frac{G\sigma_{n-k}\nabla_{ij}h}{h^2}\\
&+2Gd_{ij}\sigma_{n-k}\frac{\nabla_i[G\sigma_{n-k}]\nabla_jh}{h^2}-2Gd_{ij}\frac{[G\sigma_{n-k}]\nabla_ih\nabla_jh}{h^3}\\
=&\frac{\sigma_{n-k}\frac{\partial G}{\partial t}+Gd_{ij}[(G\sigma_{n-k}-\eta(t)h)_{ij}+h_t\delta_{ij}]}{h}-\frac{G\sigma_{n-k}h_t}{h^2}-Gd_{ij}\frac{\nabla_{ij}[G\sigma_{n-k}]}{h}\\
&+Gd_{ij}\frac{G\sigma_{n-k}\nabla_{ij}h}{h^2}-2Gd_{ij}\frac{[G\sigma_{n-k}]\nabla_ih\nabla_jh}{h^3}\\
=&\frac{\frac{\partial G}{\partial t}\sigma_{n-k}-Gd_{ij}[\eta(t)\omega_{ij}-G\sigma_{n-k}\delta_{ij}]}{h}-\frac{G\sigma_{n-k}(G\sigma_{n-k}-\eta(t)h)}{h^2}\\
&+Gd_{ij}\frac{G\sigma_{n-k}\nabla_{ij}h}{h^2}-2Gd_{ij}\frac{[G\sigma_{n-k}]\nabla_ih\nabla_jh}{h^3}\\
=&\frac{\frac{\partial G}{\partial t}\sigma_{n-k}-(n-k)\eta(t)G\sigma_{n-k}+G^2\sigma_{n-k}d_{ij}\delta_{ij}}{h}-\frac{(G\sigma_{n-k})^2}{h^2}+\frac{G\sigma_{n-k}\eta(t)}{h}\\
&+Gd_{ij}\frac{G\sigma_{n-k}\nabla_{ij}h}{h^2}-2Gd_{ij}\frac{[G\sigma_{n-k}]\nabla_ih\nabla_jh}{h^3}.
\end{align*}

From definition of $G$ and (\ref{eq410}), we know that
\begin{align*}
&\frac{\partial G}{\partial t}=\frac{1}{f(x)}\bigg(|Du|^{k+1}h_t+(k+1)h|Du|^k\frac{\partial|Du|}{\partial t}\bigg)\\
=&\frac{1}{f(x)}\bigg[|Du|^{k+1}(G\sigma_{n-k}-\eta(t)h)\\
&-(k+1)h|Du|^k\bigg((h_t)_k\langle (D^2u)x,e_k\rangle+\bigg(\langle(D^2u)x,x\rangle +(\langle |Du|^{-1}Du D^2u,x\rangle)+|Du|\bigg)h_t\bigg)\bigg].
\end{align*}

Recall that $\rho^2=h^2+|\nabla h|^2$, then
\begin{align*}
&\frac{\partial(\frac{\rho^2}{2})}{\partial t}-Gd_{ij}\nabla_{ij}\bigg(\frac{\rho^2}{2}\bigg)\\
=&hh_t+\nabla_mh\nabla_m(h_t)-Gd_{ij}\bigg(h\nabla_{ij}h+\nabla_ih\nabla_jh+\nabla_mh\nabla_j\nabla_{mi}h+\nabla_{mi}h\nabla_{mj}h\bigg)\\
=&h(G\sigma_{n-k}-\eta(t)h)+[\sigma_{n-k}\nabla_mG\nabla_mh+G\nabla_m\sigma_{n-k}\nabla_mh-\eta(t)|\nabla h|^2]-Gd_{ij}h(\omega_{ij}-h\delta_{ij})\\
&-Gd_{ij}\nabla_ih\nabla_jh-Gd_{ij}(\omega_{mij}-h_m\delta_{ij})\nabla_mh-Gd_{ij}(\omega_{mi}-h\delta_{mi})(\omega_{mj}-h\delta_{mj})\\
=&(n+1-k)hG\sigma_{n-k}-\eta(t)\rho^2+\sigma_{n-k}\nabla_mh\nabla_mG-Gd_{ij}\omega_{mi}\omega_{mj},
\end{align*}
where we use the Codazzi equation $\omega_{imj}=\omega_{ijm}$ and the $(n-k)$-homogeneity of $\sigma_{n-k}$ in the last equality. Here,
\begin{align*}
\nabla_mG=\frac{-f_m}{f^2}h|Du|^{k+1}+\frac{1}{f}h_m|Du|^{k+1}+(k+1)\frac{h}{f}|Du|^k|Du|_m,
\end{align*}
since $|Du|^2=DU\cdot Du$, then $\nabla_m|Du|^2=2\nabla_m Du\cdot Du$, thus
\begin{align*}\nabla_m|Du|=|Du|^{-1}\nabla_m Du\cdot Du\leq |D^2u|.\end{align*}
Hence, we can obtain $\nabla_mG\leq \widetilde{C}$.

Substitute the above calculations into (\ref{eq417}), we attain
\begin{align*}
&\frac{\partial M}{\partial t}\\
\leq &\frac{1}{1-\beta\frac{\rho^2}{2}}\bigg[\frac{\frac{\partial G}{\partial t}\sigma_{n-k}-(n-k)\eta(t)G\sigma_{n-k}+G^2\sigma_{n-k}d_{ij}\delta_{ij}}{h}-\frac{(G\sigma_{n-k})^2}{h^2}+\frac{G\sigma_{n-k}\eta(t)}{h}\\
&+Gd_{ij}\frac{G\sigma_{n-k}\nabla_{ij}h}{h^2}-2Gd_{ij}\frac{[G\sigma_{n-k}]\nabla_ih\nabla_jh}{h^3}\bigg]\\
&+\frac{\beta}{\bigg(1-\beta\frac{\rho^2}{2}\bigg)^2}\frac{G\sigma_{n-k}}{h}\bigg[(n+1-k)hG\sigma_{n-k}-\eta(t)\rho^2+\sigma_{n-k}\nabla_mh\nabla_mG-Gd_{ij}\omega_{mi}\omega_{mj}\bigg]\\
=&\frac{1}{1-\beta\frac{\rho^2}{2}}\bigg\{\frac{\sigma_{n-k}}{f(x)h}\bigg[|Du|^{k+1}(G\sigma_{n-k}-\eta(t)h)\\
&-(k+1)h|Du|^k\bigg((h_t)_k\langle (D^2u)x,e_k\rangle+\bigg(\langle(D^2u)x,x\rangle +(\langle |Du|^{-1}Du D^2u,x\rangle)+|Du|\bigg)h_t\bigg)\bigg]\\
&-\frac{(n-k)\eta(t)G\sigma_{n-k}}{h}+\frac{G^2\sigma_{n-k}d_{ij}\delta_{ij}}{h}-\frac{(G\sigma_{n-k})^2}{h^2}+\frac{G\sigma_{n-k}\eta(t)}{h}\\
&+Gd_{ij}\frac{G\sigma_{n-k}(\omega_{ij}-h\delta_{ij})}{h^2}-2Gd_{ij}\frac{[G\sigma_{n-k}]\nabla_ih\nabla_jh}{h^3}\bigg\}\\
&+\frac{\beta}{\bigg(1-\beta\frac{\rho^2}{2}\bigg)^2}\frac{G\sigma_{n-k}}{h}\bigg[(n+1-k)hG\sigma_{n-k}-\eta(t)\rho^2+\sigma_{n-k}\nabla_mh\nabla_mG-Gd_{ij}\omega_{mi}\omega_{mj}\bigg]\\
\leq&\frac{1}{1-\beta\frac{\rho^2}{2}}\bigg\{\frac{F}{Ghf(x)}\bigg[|Du|^{k+1}(F-\eta(t)h)-(k+1)h|Du|^k\bigg((G\sigma_{n-k}-\eta(t)h)_k\langle (D^2u)x,e_k\rangle\\
&+\bigg(\langle(D^2u)x,x\rangle +(\langle |Du|^{-1}Du D^2u,x\rangle)+|Du|\bigg)F-\eta(t)h\bigg)\bigg]\\
&-\frac{(n-k)\eta(t)F}{h}+\frac{FGd_{ij}\delta_{ij}}{h}-\frac{F^2}{h^2}+\frac{F\eta(t)}{h}+\frac{(n-k)FG}{h^2}-2Gd_{ij}\frac{F\nabla_ih\nabla_jh}{h^3}\bigg\}\\
&+\frac{\beta}{\bigg(1-\beta\frac{\rho^2}{2}\bigg)^2}\bigg[(n+1-k)F^2+\frac{F^2}{hG}\nabla_mh\nabla_mG-\frac{F(n-k)}{h}\bigg(\frac{F}{G}\bigg)^{1+\frac{1}{n-k}}\bigg]\\
\leq&\frac{1}{1-\beta\frac{\rho^2}{2}}\bigg\{\frac{F}{Ghf(x)}\bigg[F|Du|^{k+1}+\bigg(2|D^2u|+|Du|\bigg)F\bigg)\bigg]+\frac{FGd_{ii}}{h}+\frac{F\eta(t)}{h}
+\frac{(n-k)FG}{h^2}\bigg\}\\
&+\frac{\beta}{\bigg(1-\beta\frac{\rho^2}{2}\bigg)^2}\bigg[(n+1-k)F^2+\frac{F^2}{hG}\nabla_mh\nabla_mG-\frac{n-k}{hG^{1+\frac{1}{n-k}}}F^{2+\frac{1}{n-k}}\bigg]\\
=&\bigg(Gd_{ii}+\eta(t)+\frac{(n-k)G}{h}\bigg)\bigg[\frac{1}{1-\beta\frac{\rho^2}{2}}\frac{F}{h}\bigg]\\
&+\!\bigg\{\frac{h(1-\!\beta\frac{\rho^2}{2})}{Gf(x)}\bigg(\!|Du|^{k+1}\!+\!2|D^2u|\!+\!|Du|\!\bigg)\!+\!\beta h^2\bigg(\!(n+1-k)\!+\!\frac{\nabla_mh\nabla_mG}{hG}\!\bigg)\bigg\}\bigg[\frac{1}{1-\beta\frac{\rho^2}{2}}\frac{F}{h}\!\bigg]^2\\
&-\beta\bigg(1-\beta\frac{\rho^2}{2}\bigg)^\frac{1}{n-k}(n-k)\bigg(\frac{h}{G}\bigg)^{1+\frac{1}{n-k}}\bigg[\frac{1}{1-\beta\frac{\rho^2}{2}}\frac{F}{h}\bigg]^{2+\frac{1}{n-k}}.
\end{align*}
From (\ref{eq304+}) and $\Omega_t$ is a smooth strictly convex body with uniform bound, it's not difficult to see that $d_{ii}$ has uniform upper bound. Taking
\begin{align*}
P_1=&Gd_{ii}+\eta(t)+\frac{(n-k)G}{h}\leq C_2,\\
P_2=&\frac{h(1-\!\beta\frac{\rho^2}{2})}{Gf(x)}\bigg(\!|Du|^{k+1}\!+\!2|D^2u|\!+\!|Du|\!\bigg)\!+\!\beta h^2\bigg(\!(n+1-k)\!+\!\frac{\nabla_mh\nabla_mG}{hG}\!\bigg)\leq C_3,\\
P_3=&\beta\bigg(1-\beta\frac{\rho^2}{2}\bigg)^\frac{1}{n-k}(n-k)\bigg(\frac{h}{G}\bigg)^{1+\frac{1}{n-k}}\leq C_4.
\end{align*}

Thus, according to above calculations, at point $x_1$, there exist some positive constants $C_2$, $C_3$ and $C_4$ only depend on $f$ and $n$ such that
\begin{align*}
\frac{\partial M}{\partial t}\leq C_2M+C_3M^2-C_4M^{2+\frac{1}{n-k}}<0
\end{align*}
provided $M$ is sufficiently large. Thus, $M(x,t)$ is uniformly bounded from above. The upper bound of $\sigma_{n-k}$ follows from the uniformly bound of $M$.
\end{proof}

From \cite{UR}, we know that the eigenvalues of $\{\omega_{ij}\}$ and $\{\omega^{ij}\}$ are respectively the principal radii and principal curvatures
of $\Omega_t$, where $\{\omega^{ij}\}$ is the inverse matrix of $\{\omega_{ij}\}$. Therefore, to derive a positive
upper bound of principal curvatures of $\Omega_t$ at $X(x,t)$, it is equivalent to estimate the upper bound of the eigenvalues of $\{\omega^{ij}\}$.

\begin{lemma}\label{lem48} Under conditions of Lemma \ref{lem42}, there exists a positive constant $C$ independent of $t$ such that
\begin{align*}
\frac{1}{C}\leq\kappa_i(\cdot,t)\leq C,\qquad i=1,\cdots,n-1.
\end{align*}
\end{lemma}
\begin{proof} For any fixed $t\in[0,T)$, we suppose that the spatial maximum of eigenvalue of matrix $\{\frac{\omega^{ij}}{h}\}$ attained at a point $x_2$ in the direction of the unit vector $e_1\in T_{x_2}S^{n-1}$. By a rotation, we also choose the orthonormal vector field such that $\omega_{ij}$ is diagonal and the maximum eigenvalue of $\{\frac{\omega^{ij}}{h}\}$ is $\frac{\omega^{11}}{h}$.

Firstly, we calculate the evolution equation of $\omega_{ij}$ and $\omega^{ij}$. For convenience, we set $G=\frac{h|Du|^{k+1}}{f(x)}$, then $h_t=G\sigma_{n-k}-\eta(t)h$. Since $\omega_{ij}=\nabla_{ij}h+h\delta_{ij}$, we obtain
\begin{align*}
\frac{\partial \omega_{ij}}{\partial t}=&\nabla_{ij}(h_t)+h_t\delta_{ij}\\
=&\nabla_{ij}[G\sigma_{n-k}-\eta(t)h]+\bigg(G\sigma_{n-k}-\eta(t)h\bigg)\delta_{ij}\\
=&\sigma_{n-k}\nabla_{ij}G+\nabla_iG\nabla_j\sigma_{n-k}+\nabla_i\sigma_{n-k}\nabla_jG+G\nabla_{ij}\sigma_{n-k}+G\sigma_{n-k}\delta_{ij}-\eta(t)\omega_{ij},
\end{align*}
where
\begin{align*}
\nabla_i\sigma_{n-k}=d_{mn}\nabla_i(\omega_{mn}),
\end{align*}
and
\begin{align*}
\nabla_{ij}\sigma_{n-k}=d_{mn,ls}\nabla_j(\omega_{ls})\nabla_i(\omega_{mn})+d_{mn}\nabla_{ij}(\omega_{mn}).
\end{align*}
By the Codazzi equation and the Ricci identity, we have
\begin{align*}
d_{mn}\nabla_{ij}(\omega_{mn})=&d_{mn}\nabla_{nj}(\omega_{mi})\\
=&d_{mn}\nabla_{jn}(\omega_{mi})+d_{mn}\omega_{pm}\nabla_{nj}R_{pi}+d_{mn}\delta_{pi}\nabla_{nj}R_{pm}\\
=&d_{mn}\nabla_{mn}\omega_{ij}+d_{mn}\omega_{mn}\delta_{ij}-d_{mn}\omega_{jm}\delta_{in}+d_{mn}\omega_{in}\delta_{mj}-d_{mn}\omega_{ij}\delta_{mn}\\
=&d_{mn}\nabla_{mn}\omega_{ij}+(n-k)\sigma_{n-k}\delta_{ij}-d_{mn}\delta_{mn}\omega_{ij}.
\end{align*}
Then
\begin{align*}
&\frac{\partial \omega_{ij}}{\partial t}\\
=&\sigma_{n-k}\nabla_{ij}G+\nabla_iG\nabla_j\sigma_{n-k}+\nabla_i\sigma_{n-k}\nabla_jG+(n+1-k)G\sigma_{n-k}\delta_{ij}-\eta(t)\omega_{ij}\\
&+G\bigg(d_{mn,ls}\nabla_j(\omega_{ls})\nabla_i(\omega_{mn})+d_{mn}\nabla_{mn}\omega_{ij}-d_{mn}\delta_{mn}\omega_{ij}\bigg).
\end{align*}
Hence,
\begin{align}\label{eq418}
&\frac{\partial \omega_{ij}}{\partial t}-Gd_{mn}\nabla_{mn}\omega_{ij}\\
\nonumber=&\sigma_{n-k}\nabla_{ij}G+\nabla_iG\nabla_j\sigma_{n-k}+\nabla_i\sigma_{n-k}\nabla_jG+(n+1-k)G\sigma_{n-k}\delta_{ij}-\eta(t)\omega_{ij}\\
\nonumber&+G\bigg(d_{mn,ls}\nabla_j(\omega_{ls})\nabla_i(\omega_{mn})-d_{mn}\delta_{mn}\omega_{ij}\bigg).
\end{align}

Since $\frac{\partial \omega^{ij}}{\partial t}=-(\omega^{ij})^2\frac{\partial \omega_{ij}}{\partial t}$ and $\nabla_{mn}\omega^{ij}=2(\omega^{ij})^3\nabla_m\omega_{ij}\nabla_n\omega_{ij}-(\omega^{ij})^2\nabla_{mn}\omega_{ij}$, thus, there is the following evolution equation by (\ref{eq418}),
\begin{align}\label{eq419}
&\frac{\partial \omega^{ij}}{\partial t}-Gd_{mn}\nabla_{mn}\omega^{ij}\\
\nonumber=&-(\omega^{ij})^2\sigma_{n-k}\nabla_{ij}G-(\omega^{ij})^2\nabla_iG\nabla_j\sigma_{n-k}
-(\omega^{ij})^2\nabla_i\sigma_{n-k}\nabla_jG\\
\nonumber&-(n+1-k)(\omega^{ij})^2G\sigma_{n-k}\delta_{ij}+\eta(t)\omega^{ij}\\
\nonumber&-G(\omega^{ij})^2\bigg(d_{mn,ls}\nabla_j(\omega_{ls})\nabla_i(\omega_{mn})-d_{mn}\delta_{mn}\omega_{ij}\bigg)-2Gd_{mn}(\omega^{ij})^3\nabla_m\omega_{ij}\nabla_n\omega_{ij}.
\end{align}

At $x_2$, we get
\begin{align}\label{eq420}
\nabla_i\frac{\omega^{11}}{h}=0,\quad \text{i.e.},\quad \omega^{11}\nabla_i\omega_{11}=-\frac{\nabla_ih}{h},
\end{align}
\begin{align*}
\nabla_{ij}\omega_{11}=&\frac{\omega_{11}\nabla_ih \nabla_jh}{h^2}-\frac{\omega_{11}\nabla_{ij}h+\nabla_ih\nabla_j\omega_{11}}{h}\\
=&2\frac{\omega_{11}\nabla_ih \nabla_jh}{h^2}-\frac{\omega_{11}\nabla_{ij}h}{h}.
\end{align*}
And
\begin{align}\label{eq421}
\nabla_{ij}\frac{\omega^{11}}{h}\leq0.
\end{align}
Now, from (\ref{eq419}) and (\ref{eq421}), we compute the following evolution equation as
\begin{align}\label{eq422}
\frac{\partial(\frac{\omega^{11}}{h})}{\partial t}\leq&\frac{\partial(\frac{\omega^{11}}{h})}{\partial t}-Gd_{ij}\nabla_{ij}\frac{\omega^{11}}{h}\\
\nonumber=&\frac{\frac{\partial \omega^{11}}{\partial t}}{h}-\frac{\omega^{11}h_t}{h^2}-Gd_{ij}\bigg(\frac{2(\omega^{11})^3\nabla_i\omega_{11}\nabla_j\omega_{11}-(\omega^{11})^2\nabla_{ij}\omega_{11}}{h}\\
\nonumber&+\frac{(\omega^{11})^2\nabla_i\omega_{11}\nabla_jh}{h^2}+\frac{(\omega^{11})^2\nabla_j\omega_{11}\nabla_ih-\omega^{11}\nabla_{ij}h}{h^2}
\nonumber+\frac{2\omega^{11}\nabla_ih\nabla_jh}{h^3}\bigg)\\
\nonumber=&\frac{(-\omega^{11})^2\nabla_{ij}G\sigma_{n-k}-(\omega^{11})^2\nabla_iG\nabla_j\sigma_{n-k}-(\omega^{11})^2\nabla_jG\nabla_i\sigma_{n-k}}{h}\\
\nonumber&-\frac{(n+1-k)(\omega^{11})^2G\sigma_{n-k}\delta_{ij}-\eta(t)\omega^{11}}{h}\\
\nonumber&-\frac{G(\omega^{11})^2\bigg(d_{ij,ls}\nabla_j(\omega_{ls})\nabla_i(\omega_{11})+d_{ij}\nabla_{ij}\omega_{11}-d_{ij}\delta_{ij}\omega_{11}\bigg)}{h}-\frac{\omega^{11}h_t}{h^2}\\
\nonumber&-Gd_{ij}\bigg(\frac{2(\omega^{11})^3\nabla_i\omega_{11}\nabla_j\omega_{11}-(\omega^{11})^2\nabla_{ij}\omega_{11}}{h}+\frac{(\omega^{11})^2\nabla_i\omega_{11}\nabla_jh}{h^2}\\
\nonumber&+\frac{(\omega^{11})^2\nabla_j\omega_{11}\nabla_ih-\omega^{11}\nabla_{ij}h}{h^2}+\frac{2\omega^{11}\nabla_ih\nabla_jh}{h^3}\bigg)\\
\nonumber=&\frac{-(\omega^{11})^2\nabla_{ij}G\sigma_{n-k}}{h}-\frac{(\omega^{11})^2(\nabla_iG\nabla_j\sigma_{n-k}+\nabla_jG\nabla_i\sigma_{n-k})}{h}\\
\nonumber&-\frac{(n+1-k)(\omega^{11})^2G\sigma_{n-k}\delta_{ij}-\eta(t)\omega^{11}}{h}\\
\nonumber&-\frac{G(\omega^{11})^2\bigg(d_{ij,ls}\nabla_j(\omega_{ls})\nabla_i(\omega_{11})+2d_{ij}\omega^{11}\nabla_i\omega_{11}\nabla_j\omega_{11}\bigg)}{h}\\
\nonumber&+G(\omega^{11})^2\frac{d_{ij}\delta_{ij}\omega_{11}}{h}-\frac{\omega^{11}h_t}{h^2}-Gd_{ij}\bigg(\frac{(\omega^{11})^2\nabla_i\omega_{11}\nabla_jh}{h^2}\\
\nonumber&+\frac{(\omega^{11})^2\nabla_j\omega_{11}\nabla_ih-\omega^{11}\nabla_{ij}h}{h^2}+\frac{2\omega^{11}\nabla_ih\nabla_jh}{h^3}\bigg).
\end{align}

From the reverse concavity of $(\sigma_{n-k})^{\frac{1}{n-k}}$, therefore, by \cite[(3.49)]{UR}, we can derive
\begin{align}\label{eq423}
(d_{ij,mn}+2d_{im}\omega^{nj})\nabla_1\omega_{ij}\nabla_1\omega_{mn}\geq \frac{n+1-k}{n-k}\frac{(\nabla_1\sigma_{n-k})^2}{\sigma_{n-k}}.
\end{align}
Moreover, according to Schwartz inequality, the following result is true,
\begin{align}\label{eq424}
2|\nabla_1\sigma_{n-k}\nabla_1G|\leq\frac{n+1-k}{n-k}\frac{G(\nabla_1\sigma_{n-k})^2}{\sigma_{n-k}}+\frac{n-k}{n+1-k}\frac{\sigma_{n-k}(\nabla_1G)^2}{G}.
\end{align}
Thus, at point $x_2$, substituting (\ref{eq420}), (\ref{eq423}) and (\ref{eq424}) into (\ref{eq422}), we get
\begin{align}\label{eq425}
\nonumber&\frac{\partial(\frac{\omega^{11}}{h})}{\partial t}-Gd_{ij}\nabla_{ij}\frac{\omega^{11}}{h}\\
\nonumber\leq&-\frac{(\omega^{11})^2\nabla_{ij}G\sigma_{n-k}}{h}+\frac{(\omega^{11})^2}{h}\bigg(\frac{n+1-k}{n-k}\frac{G(\nabla_1\sigma_{n-k})^2}{\sigma_{n-k}}
+\frac{n-k}{n+1-k}\frac{\sigma_{n-k}(\nabla_1G)^2}{G}\bigg)\\
\nonumber&-\frac{(n+1-k)(\omega^{11})^2G\sigma_{n-k}\delta_{ij}}{h}+2\frac{\eta(t)\omega^{11}}{h}\\
\nonumber&-\frac{G(\omega^{11})^2}{h}\frac{n+1-k}{n-k}\frac{(\nabla_1\sigma_{n-k})^2}{\sigma_{n-k}}+\frac{(n-k)G\sigma_{n-k}(\omega^{11})^2}{h}\\
\nonumber&-Gd_{ij}\bigg(\frac{(\omega^{11})^2\nabla_i\omega_{11}\nabla_jh}{h^2}+\frac{(\omega^{11})^2\nabla_j\omega_{11}\nabla_ih-\omega^{11}\nabla_{ij}h}{h^2}+\frac{2\omega^{11}\nabla_ih\nabla_jh}{h^3}\bigg)\\
\nonumber\leq&-\frac{(\omega^{11})^2\nabla_{ij}G\sigma_{n-k}}{h}+\frac{(\omega^{11})^2\sigma_{n-k}}{h}\bigg(\frac{n-k}{n+1-k}\frac{(\nabla_1G)^2}{G}\bigg)\\
\nonumber&-\frac{(n+1-k)(\omega^{11})^2G\sigma_{n-k}\delta_{ij}}{h}+2\frac{\eta(t)\omega^{11}}{h}\\
\nonumber&+\frac{(n-k)G\sigma_{n-k}(\omega^{11})^2}{h}+(n-k)G\sigma_{n-k}(\omega^{11})^2\bigg(\frac{\omega_{11}-h}{h^2}\bigg)\\
\leq&-\frac{(\omega^{11})^2}{h}\bigg[\nabla_{11}G\sigma_{n-k}-\frac{n-k}{n+1-k}\sigma_{n-k}\frac{(\nabla_1G)^2}{G}+(n+1-k)G\sigma_{n-k}\bigg]\\
\nonumber&+2\frac{\eta(t)\omega^{11}}{h}-(k-n-1)\frac{\omega^{11}}{h^2}G\sigma_{n-k}.
\end{align}
We use the inequality in \cite{ZR}: $$2\nabla_1G(\sigma_{n-k})_1\leq(n-k)G(n+1-k)G^{n-k-2}(\nabla_1G)^2+\frac{n-k}{n+1-k}\sigma_{n-k}\frac{(\nabla_1G)^2}{G}.$$ By using $\nabla_i\nabla_jG^{\frac{1}{n+1-k}}+G^{\frac{1}{n+1-k}}\delta_{ij}>0$, then we have
\begin{align}\label{eq426}
\frac{1}{n+1-k}\nabla_1\nabla_1G+\frac{1}{n+1-k}\bigg(\frac{1}{n+1-k}-1\bigg)\frac{(\nabla_1G)^2}{G}+G>0.
\end{align}
Inserting (\ref{eq426}) into (\ref{eq425}), by the uniform bounds on $f$, $h$, $\lambda(t)$, $|Du|$ and $\sigma_{n-k}$, we conclude there exists $c_0,c>0$ such that
\begin{align*}
\frac{\partial(\frac{\omega^{11}}{h})}{\partial t}-Gd_{ij}\nabla_{ij}\frac{\omega^{11}}{h}\leq -c_0\frac{(\omega^{11})^2}{h}+c\frac{\omega^{11}}{h}.
\end{align*}

Therefore, $\omega^{11}(x,t)$ has a uniform upper bound, which means that the principal radii is bounded from below by a positive constant $c_1$. In addition, from Lemma \ref{lem47}, we know that for minimal eigenvalue $\lambda_{\min}=\frac{1}{\kappa_{\max}}$ of $\frac{\omega^{11}(x,t)}{h}$ at point $x_2$,
\begin{align*}
C_1\geq\sigma_{n-k}=&\lambda_{\max}\sigma_{n-k-1}(\lambda|\lambda_{\max})+\sigma_{n-k}(\lambda|\lambda_{\max})\\
\geq &\lambda_{\max}\sigma_{n-k-1}(\lambda|\lambda_{\max})\\
\geq &C^{n-k-1}_{n-1}\lambda_{\min}^{n-k-1}\lambda_{\max}\\
\geq &C^{n-k-1}_{n-1}c_1^{n-k-1}\lambda_{\max}
\end{align*}
for constant $C$. Consequently, the principal radii of curvature has uniform upper and lower bounds. This completes the proof of Lemma \ref{lem48}.
\end{proof}
\vskip 0pt
\section{\bf The convergence of the flow}\label{sec5}

With the help of priori estimates in the section \ref{sec4}, the long-time existence and asymptotic behaviour of the flow (\ref{eq110}) are obtained, we also complete the proof of Theorem \ref{thm12}.
\begin{proof}[Proof of Theorem \ref{thm12}] Since the equation (\ref{eq301}) is parabolic, we can get its short time existence. Let $T$ be the maximal time such that $h(\cdot, t)$ is a smooth solution to the equation (\ref{eq301}) for all $t\in[0,T)$. Lemmas \ref{lem42}-\ref{lem47} enable us to apply Lemma \ref{lem48} to the equation (\ref{eq301}), thus, we can deduce an uniformly upper bound and an uniformly lower bound for the biggest eigenvalue of $\{(h_{ij}+h\delta_{ij})(x,t)\}$. This implies
\begin{align*}
	C^{-1}I\leq (h_{ij}+h\delta_{ij})(x,t)\leq CI,\quad \forall (x,t)\in S^{n-1}\times [0,T),
\end{align*}
where $C>0$ is independent of $t$. This shows that the equation (\ref{eq301}) is uniformly parabolic. Estimates for higher derivatives follow from the standard regularity theory of uniformly parabolic equations
Krylov \cite{KR}. Hence, we obtain the long time existence and regularity of solutions for the flow
(\ref{eq110}). Moreover, we obtain
\begin{align}\label{eq501}
\|h\|_{C^{l,m}_{x,t}(S^{n-1}\times [0,T))}\leq C_{l,m},
\end{align}
where $C_{l,m}$ ($l, m$ are nonnegative integers pairs) are independent of $t$, then $T=\infty$. Using the parabolic comparison principle, we can attain the uniqueness of smooth non-even solutions $h(\cdot,t)$ of the equation (\ref{eq301}).

Now, we recall the property of non-decreasing of $\widetilde{T}_k(\Omega_t)$ in Lemma \ref{lem32}, we know that

\begin{align}\label{eq502}
\frac{\partial\widetilde{T}_k(\Omega_t)}{\partial t}\geq 0.
\end{align}
Based on (\ref{eq502}), there exists a $t_0$ such that
\begin{align*}
\frac{\partial\widetilde{T}_k(\Omega_t)}{\partial t}\bigg|_{t=t_0}=0,
\end{align*}
this yields
\begin{align*}
\tau|Du|^{k+1}\sigma_{n-k}=f.
\end{align*}
Let $\Omega=\Omega_{t_0}$, thus, the support function of $\Omega$ satisfies (\ref{eq109}).

In view of (\ref{eq501}), and applying the Arzel$\grave{\textrm{a}}$-Ascoli theorem \cite{BRE} and a diagonal argument, we can extract
a subsequence of $t$, it is denoted by $\{t_j\}_{j \in \mathbb{N}}\subset (0,+\infty)$, and there exists a smooth function $\bar{h}(x)$ such that
\begin{align}\label{eq603}
\|h(x,t_j)-\bar{h}(x)\|_{C^l(S^{n-1})}\rightarrow 0,
\end{align}
uniformly for each nonnegative integer $l$ as $t_j \rightarrow \infty$. This reveals that $\bar{h}(x)$ is a support function. Let us denote by $\Omega$ the convex body determined by $\bar{h}(x)$. Thus, $\Omega$ is a smooth strictly convex body.

Moreover, by (\ref{eq501}) and the uniform estimates in section \ref{sec4}, we conclude that $\widetilde{T}_k(\Omega_t)$ is a bounded function in $t$ and $\frac{\partial \widetilde{T}_k(\Omega_t)}{\partial t}$ is uniformly continuous. Thus, for any $t>0$, by the monotonicity of $\widetilde{T}_k(\Omega_t)$ in Lemma \ref{lem32}, there is a constant $C>0$ being independent of $t$, such that
\begin{align*}
\int_0^t\bigg(\frac{\partial\widetilde{T}_k(\Omega_t)}{\partial t}\bigg)dt=\widetilde{T}_k(\Omega_t)-\widetilde{T}_k(\Omega_0))\leq C,
\end{align*}
this gives
\begin{align}\label{eq504}
\lim_{t\rightarrow\infty}\widetilde{T}_k(\Omega_t)-\widetilde{T}_k(\Omega_0)=
\int_0^\infty\frac{\partial}{\partial t}\widetilde{T}_k(\Omega_t)dt\leq C.
\end{align}
The left hand side of (\ref{eq504}) is bounded, therefore, there is a subsequence $t_j\rightarrow\infty$ such that
\begin{align*}
\frac{\partial}{\partial t}\widetilde{T}_k(\Omega_{t_j})\rightarrow 0 \quad\text{as}\quad  t_j\rightarrow\infty.
\end{align*}
The proof of Lemma \ref{lem32} shows that
\begin{align}\label{eq505}
&\frac{\partial\widetilde{T}_k(\Omega_t)}{\partial t}\bigg|_{t=t_j}\\
\nonumber&=\frac{1}{k}\bigg(\!\int_{S^{n-1}}\frac{h}{f(x)}|Du|^{2(k+1)}(\sigma_{n-k})^2dx
\!-\!\frac{\int_{S^{n-1}}h\sigma_{n-k}|D u|^{k+1}dx}{\int_{S^{n-1}}hf(x)dx}\!\int_{S^{n-1}}h|Du|^{k+1}\sigma_{n-k}dx\bigg)\geq 0.
\end{align}
Taking the limit $t_j\rightarrow\infty$, by equality condition of (\ref{eq505}), it means that there has
\begin{align*}
\tau|Du(X^\infty)|^{k+1}\sigma_{n-k}(h^\infty_{ij}+h^\infty\delta_{ij})=f(x),
\end{align*}
which satisfies (\ref{eq109}). From (\ref{eq501}) and the Arzel$\grave{\textrm{a}}$-Ascoli theorem, we know taht $h^\infty$ is
the support function and the convex body determined by $h^\infty$ is denoted as $\Omega_\infty$. Here, $X^{\infty}=\overline{\nabla} h^{\infty}$ and $\frac{1}{\tau}=\lim_{t_j\rightarrow\infty}\eta(t_j)$. This completes the proof of Theorem \ref{thm12}.
\end{proof}

\end{document}